\documentclass[12pt]{amsart}
\usepackage{mathrsfs} 
\usepackage[sorted-cites]{amsrefs}
\usepackage[matrix,arrow,tips,curve]{xy}
\usepackage[english]{babel}
\usepackage{amsmath}
\usepackage{amssymb}
\numberwithin{equation}{section}
\theoremstyle{plain}
\usepackage{enumerate}
\usepackage{graphicx}
\usepackage{hyperref}
\usepackage{color}
\DeclareMathAlphabet{\pazocal}{OMS}{zplm}{m}{n}

\usepackage{xcolor}

\newtheorem{theorem}{Theorem} [section]
\newtheorem{lemma}{Lemma}[section]
\newtheorem{proposition}{Proposition}[section]

\newtheorem{example}{Example}
\theoremstyle{remark}
\newtheorem{remark}{Remark}[section]

\hypersetup{pdfpagemode=UseNone} 
\hypersetup{pdfstartview=FitH}

\title[]{Eigenvalue Estimates for Schrödinger Operators on Ricci Shrinkers}

\author[Cheng]{Xu Cheng}
\address[Xu Cheng]{Instituto de Matem\'atica e Estat\'istica, Universidade Federal Fluminense, S\~ao Domingos,
Niter\'oi, RJ 24210-201, Brazil}
\email{xucheng@id.uff.br}
\author[Conrado]{Franciele Conrado}\address[Franciele Conrado]{Departamento de Matem\'atica, Universidade Federal de Sergipe, Jardim Rosa Elze, S\~ao Crist\'ov\~ao, SE  49100-000, Brazil}
\email{franciele@mat.ufs.br}
\author[Pinheiro]{Neilha Pinheiro}
\address[Neilha Pinheiro]{Universidade Federal do Amazonas\\
Instituto de Ci\^encias Exatas\\
Manaus, AM 69080900, Brazil.}
\email{neilha@ufam.edu.br}

\author[Zhou]{Detang Zhou}
\address[Detang Zhou]{ Instituto de Matem\'atica e Estat\'istica, Universidade Federal Fluminense, S\~ao Domingos,
Niter\'oi, RJ 24210-201, Brazil}
\email{zhoud@id.uff.br}
\date{}
\thanks{X. Cheng was partially supported by CNPq/Brazil [Grant:  314504/2023-0]. }
\thanks{D. Zhou was partially supported by CNPq/Brazil [Grant:  308067/2023-1] and FAPERJ/ Brazil [Grant: E-26/200.386/2023].}

\thanks{F. Conrado was partially supported by CNPq/Brazil [Grant: 408834/2023-4].}

\begin{document}


\begin{abstract}
 Let  $(M, g, f, \tau)$ be a complete Ricci shrinker satisfying  $\textrm{Ric}+\nabla^2f=\frac{g}{2\tau}$  and let $R$ denote its scalar curvature. For  a confined function $V$ on $M$, we obtain a lower bound for the lowest eigenvalue  of the Schr\"odinger operator
\(
-\Delta+\frac{R}{4}+V,
\)
expressed in terms of an integral quantity involving $V$ and the shrinker entropy, and  the equality case is characterized  by the potential functions. 
We further generalize this estimate to  complete Riemannian manifolds via Perelman’s $\mu$-functional. We also study the drifted Schr\"odinger operator $-\Delta_f+V$ on smooth metric measure spaces. In particular, on Ricci shrinkers, we derive a lower bound for its lowest eigenvalue, with equality if and only if  $V$ is  affine. 
\end{abstract}
\maketitle

\section{Introduction}

\medskip

 Let $(M,g)$ be an $n$-dimensional Riemannian manifold. For $f\in C^\infty(M)$ and a constant $\tau>0$, the quadruple $(M,g,f,\tau)$ is called a \textit{shrinking gradient Ricci soliton} or simply a \textit{Ricci shrinker} if 
\begin{equation}\label{shrinker}
   \textrm{Ric}+\nabla^2f= \frac{1}{2\tau} g,
\end{equation}
 where $\nabla^2 f$ denotes the Hessian of $f$ and $\textrm{Ric}$ is the Ricci curvature tensor of $(M,g)$.  In this case,  $f$ is called a \textit{potential function} of the Ricci shrinker.

Any Einstein manifold of positive scalar curvature is a Ricci shrinker with constant potential function. An important example of a non-Einstein Ricci shrinker is the {\it Gaussian shrinker}, given by Euclidean space $\mathbb{R}^n$ with potential function  $f(x) = \frac{|x|^2}{4\tau}$, $x\in\mathbb{R}^n$. There are many other examples; see, e.g., \cite{Cao2010}. Ricci shrinkers, lying at the intersection of critical metrics and geometric flows, have been extensively studied; see, for example, \cites{Cao2010,caozhou,RF1,chengzhou, hamilton},  among others.

In this paper, we study the spectrum properties of the Schr\"odinger operators. Let $(M,g)$ be a complete Riemannian manifold. Given $V\in L^{\infty}_{\textrm{loc}}(M)$ real-valued and bounded below, it is known that the Schr\"odinger operator $-\Delta+V$  is
 essentially self-adjoint   on  $L^2(M)$. Moreover, if $M$ is compact or if $M$ is  non-compact and  $V$  is confining in the sense that
\begin{equation}\label{confin}
   \lim_{d(x,p_0)\to\infty}V(x)=+\infty, \  \textrm{ for some } p_0\in M, 
\end{equation}
 then  $-\Delta+V$ on $L^2(M)$ has discrete spectrum; see, e.g., \cite{L}.

For $-\Delta+V$  on $L^2(\mathbb{R}^n)$,  Keller \cite{Keller} proposed the problem of minimizing its lowest
eigenvalue among potentials $V$ with prescribed $L^p$ norm.  Lieb and Thirring \cite{LiebThirring}  solved this problem in terms of optimal constants in certain Sobolev interpolation inequalities.
More recently, Frank \cite{Frank} studied the corresponding minimization for confining potential $V$  satisfying $e^{-4\tau V}\in L^1(\mathbb{R}^n)$ for some $\tau>0$. He proved that, among all such potentials  with   $\int_{\mathbb{R}^n}e^{-4\tau V(x)}dx $ fixed, the lowest eigenvalue is minimized,  when $4\tau V$ is a harmonic oscillator potential. More precisely, 

\begin{theorem}[Frank, \cite{Frank}]\label{Frank}
Let  $V:\mathbb{R}^n\to\mathbb{R}$ be a confining potential such that $e^{-4\tau V}\in L^1(\mathbb{R}^n)$ for some $\tau>0$. Then, the lowest eigenvalue $E_0$ of  $ -\Delta+V$ satisfies
\begin{equation}\label{frankestimate}
   E_0 \geq -\frac{1}{4\tau}\ln{\int_{\mathbb{R}^ n}}e^{-4\tau V(x)}dx+\frac{n}{4\tau}\Big(1+\frac{1}{2}\ln{(4\pi\tau)}\Big).
\end{equation}
Moreover, the equality holds if and only if 
$$4\tau V(x)=\frac{|x-b|^2}{4\tau}+\textrm{const},$$
for some $b\in\mathbb{R}^n.$
\end{theorem}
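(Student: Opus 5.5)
The plan is to combine the Rayleigh quotient with the Gibbs variational principle (the dual form of Jensen's inequality) and then apply the Gaussian logarithmic Sobolev inequality in its scale-invariant form. Let $u$ be the normalized positive ground state, so $\int u^2\,dx=1$ and
\[
E_0=\int_{\mathbb{R}^n}|\nabla u|^2+\int_{\mathbb{R}^n} Vu^2 .
\]
Confinement gives discrete spectrum, a positive ground state, and enough decay to justify the integrations below.

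\emph{Step 1 (Gibbs).} Since $u^2\,dx$ is a probability measure, Jensen's inequality applied to $e^{-4\tau V}/u^2$ gives
\[
\int u^2\bigl(-4\tau V-\ln u^2\bigr)\,dx\le \ln\int e^{-4\tau V}\,dx .
\]
This can be rewritten as
\[
\int Vu^2\ge -\frac{1}{4\tau}\ln\int e^{-4\tau V}-\frac{1}{4\tau}\int u^2\ln u^2 .
\]
Equality holds iff $u^2=e^{-4\tau V}/\int e^{-4\tau V}$.

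\emph{Step 2 (log-Sobolev).} I will use the Euclidean logarithmic Sobolev inequality with parameter $\tau$, which is Perelman's statement that $\mu(\mathbb{R}^n,\tau)=0$. For $\int u^2=1$,
\[
\int u^2\ln u^2\le 4\tau\int|\nabla u|^2-\frac n2\ln(4\pi\tau)-n .
\]
Equality holds exactly for Gaussians $u^2=(4\pi\tau)^{-n/2}e^{-|x-b|^2/(4\tau)}$. To derive it, write $u^2=(4\pi\tau)^{-n/2}e^{-|x|^2/4\tau}w^2$ and invoke Gross's Gaussian log-Sobolev inequality, or simply cite the sharp form due to Weissler/Carlen.

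\emph{Step 3 (combine).} Dividing Step 2 by $4\tau$ and inserting it into Step 1 makes the gradient term cancel exactly:
\[
\int Vu^2\ge -\frac{1}{4\tau}\ln\int e^{-4\tau V}-\int|\nabla u|^2+\frac{n}{4\tau}\Bigl(1+\tfrac12\ln(4\pi\tau)\Bigr).
\]
Adding $\int|\nabla u|^2$ to both sides yields \eqref{frankestimate}.

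\emph{Step 4 (equality).} If equality holds in \eqref{frankestimate}, both inequalities must be equalities. Hence $u$ is a Gaussian centered at some $b$, and $e^{-4\tau V}$ is proportional to $u^2$, which gives $4\tau V=\frac{|x-b|^2}{4\tau}+\mathrm{const}$. Conversely, for this $V$ the operator is a harmonic oscillator whose ground state is exactly that Gaussian, and a direct computation gives equality.

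The main obstacle is technical rather than conceptual. We need the ground state $u$ to lie in $H^1$ with $u^2\ln u^2\in L^1$ and $Vu^2\in L^1$, so that the manipulations above are legitimate. Both follow from $E_0<\infty$ and $V$ bounded below, and log-Sobolev already controls the positive part of $u^2\ln u^2$. The alternative is to apply the argument to an arbitrary compactly supported test function and take the infimum.
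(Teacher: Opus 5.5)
Your proof is correct and is essentially the paper's argument. The paper only cites Frank's theorem, but it recovers it as the Gaussian-shrinker case ($f=\frac{|x|^2}{4\tau}$, $\mu_s=\mu(g,\tau)=0$) of Theorems \ref{mainteo1} and \ref{mainteo2}, whose proofs use your chain: the positive ground state in the Rayleigh quotient, Perelman's $\mu$-functional bound (your Euclidean log-Sobolev inequality in Step 2), and then the Gibbs variational principle (Lemma \ref{gibbs}), with equality forcing both steps to be sharp. The only difference is in Theorem \ref{mainteo1}, where the paper conjugates to the weighted space $L^2(e^{-f}d\upsilon)$ (your substitution $u^2=(4\pi\tau)^{-n/2}e^{-|x|^2/4\tau}w^2$) and gets rigidity from Conrado's result $\nabla^2\ln w^2=0$ (Proposition \ref{lsi}); on $\mathbb{R}^n$ this is equivalent to your appeal to Carlen's characterization of the Gaussian extremals.
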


Motivated by the fact that harmonic oscillators on $\mathbb{R}^n$ correspond to the Gaussian shrinker in Ricci flow, we study the setting of Ricci shrinkers and subsequently derive lower bounds  for the lowest eigenvalues of two  Schrödinger-type operators on Ricci shrinkers  and, more generally, on complete Riemannian manifolds.  The first operator is 
$$ -\Delta+\frac{R}{4}+V,$$
acting on $L^2(M)$, where $R$ denotes the scalar curvature of $(M,g)$ and $V\in L^{\infty}_{\textrm{loc}}(M)$ is real-valued and bounded below. 
We  prove the following result.

\begin{theorem}\label{mainteo1}Let $(M,g, f,\tau)$ be an $n$-dimensional complete Ricci shrinker, and let $V$ satisfy  $e^{-4\tau V}\in L^1(M)$. If $M$ is compact, or if $M$ is non-compact and $V$ is  confining, then 
\[
\mathcal{L}=-\Delta+\frac{R}{4}+V
\]
on $L^2(M)$ has discrete spectrum, and its lowest
eigenvalue satisfies
\begin{equation}\label{e1}
    \lambda_0(\mathcal{L})\geq -\frac{1}{4\tau}\ln\int_M e^{-4\tau V}\, dv + \frac{\mu_s}{4\tau}+\frac{n}{4\tau}\left(1+\frac{1}{2}\ln(4\pi\tau)\right),
\end{equation}
where  $\mu_s$ denotes the entroy of Ricci shrinker given by \eqref{rentropy}.

Moreover, equality in \eqref{e1} holds if and only if  $4\tau V$  is a potential function of the  Ricci shrinker.

\end{theorem}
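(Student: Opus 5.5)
The natural route is Perelman's $\mu$-functional together with the entropy inequality $\int \ln$ (Jensen / Gibbs variational principle). Recall that for a Ricci shrinker the entropy $\mu_s=\mu(g,\tau)$ is attained by the potential function: with the normalization $\int_M (4\pi\tau)^{-n/2}e^{-f}\,dv=1$, we have for every $u$ with $\int_M u^2\,dv=1$ the log-Sobolev inequality
\[
\int_M \Big(\tau(4|\nabla u|^2+Ru^2)-u^2\ln u^2\Big)dv-\frac n2\ln(4\pi\tau)-n\ \ge\ \mu_s ,
\]
with equality when $u^2=(4\pi\tau)^{-n/2}e^{-f}$. I will take \eqref{rentropy} in this form (on non-compact shrinkers, the attainment of $\mu$ by $f$ and the validity of the inequality for $W^{1,2}$ functions being known results, e.g. Carrillo–Ni, Li–Wang).

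First, discreteness: since $R\ge0$ on complete shrinkers (Chen), $R/4+V$ is bounded below and confining when $V$ is, so the criterion quoted from \cite{L} applies. In the compact case this is standard. Let $u$ be a normalized first eigenfunction, which can be taken positive. Then
\[
4\tau\lambda_0=\int_M\big(4\tau|\nabla u|^2+\tau Ru^2+4\tau Vu^2\big)dv .
\]
Note that $4\tau|\nabla u|^2$ is already the gradient term of the $\mathcal W$-functional and $\tau R u^2$ appears with coefficient $1$. Applying the log-Sobolev inequality above gives
\[
4\tau\lambda_0\ge \mu_s+n+\tfrac n2\ln(4\pi\tau)+\int_M u^2\big(\ln u^2+4\tau V\big)dv .
\]
Next, by Jensen (Gibbs' inequality) applied to the probability measure $u^2dv$,
\[
\int u^2\ln\big(u^2e^{4\tau V}\big)\ge -\ln\int e^{-4\tau V}dv ,
\]
with equality if and only if $u^2=e^{-4\tau V}/\int e^{-4\tau V}$. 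Dividing by $4\tau$ yields \eqref{e1}.

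For the equality case, equality forces both steps to be equalities. Thus $u^2$ is proportional to $e^{-4\tau V}$, and $u$ is a minimizer of $\mu(g,\tau)$. One then needs minimizers of $\mu$ on a shrinker to be of the form $u^2=c\,e^{-\tilde f}$ with $\tilde f$ a potential function. The Euler–Lagrange equation gives $\tau(2\Delta \tilde f-|\nabla\tilde f|^2+R)+\tilde f-n=\mu_s$, and one shows $\mathrm{Ric}+\nabla^2\tilde f=g/(2\tau)$, via Perelman's monotonicity/rigidity argument or the uniqueness results for minimizers on shrinkers. Hence $4\tau V=\tilde f+$const is a potential function. Conversely, if $4\tau V=f+c$, take $u=(4\pi\tau)^{-n/4}e^{-f/2}$. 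This $u$ saturates both inequalities, and it is an eigenfunction because $-\Delta u+\frac R4u+Vu=\frac{1}{4\tau}(\ldots)u$ reduces, using $\Delta f-|\nabla f|^2+f=\frac n2+\ldots$ and $R+|\nabla f|^2=f/\tau$ (after normalization), to a constant. Since it is positive, it is the ground state.

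The main obstacle is the non-compact case. There one must justify that the eigenfunction $u$ lies in the class where the log-Sobolev inequality holds, including integrability of $u^2\ln u^2$ and $Ru^2$; decay estimates from confinement handle this. One must also establish the characterization of the minimizers of $\mu$ in the equality case, which I would cite from the known uniqueness results rather than reprove.
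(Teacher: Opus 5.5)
Your argument is correct and is essentially the paper's. The inequality you use, $\mathcal K(g,\phi,\tau)\ge\mu_s$ for $\int_M\phi^2\,dv=1$, followed by the Gibbs principle, is exactly how the paper proves Theorem~\ref{mainteo2}, and the paper remarks that \eqref{e1} follows from it via $\mu(g,\tau)=\mu_s$. Its direct proof of this theorem is the same computation after the ground-state transform $\phi=w(4\pi\tau)^{-n/4}e^{-f/2}$: there $\tau R_f=n+\mu_s-f$ gives $\mathcal K(g,\phi,\tau)-\mu_s=\int_M\bigl(4\tau|\nabla w|^2-w^2\ln w^2\bigr)\,d\mu$, which is the Bakry--\'Emery inequality \eqref{log}. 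Consequently, the characterization of $\mu$-minimizers you defer to the literature is precisely the equality case of \eqref{log}, i.e.\ Proposition~\ref{lsi}: it gives $\nabla^2\ln w^2=0$, so $\tilde f=f-\ln w^2$ has $\nabla^2\tilde f=\nabla^2 f$. That is the clean result to cite, rather than Perelman's monotonicity (awkward on noncompact shrinkers) or ``uniqueness'' of minimizers (true only modulo functions with vanishing Hessian).
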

\begin{remark}
Theorem \ref{mainteo1} implies that, among all potentials $V$ such that   $\int_{M}e^{-4\tau V}d\upsilon $ is fixed, the lowest eigenvalue is minimized,  when $4\tau V$  is a potential function of the  Ricci shrinker.
\end{remark}

Next we generalize  Theorem \ref{mainteo1} to  Riemannian manifolds.  More precisely, we prove the following result.

\begin{theorem}\label{mainteo2}
Let $(M,g)$ be an $n$-dimensional complete  Riemannian manifold whose scalar curvature $R$ is bounded below, and let $V$ satisfy $e^{-4\tau V}\in L^1(M)$  for some $\tau>0$. If $M$ is compact, or if $M$ is non-compact and $V$ is  confining, then 
\[
\mathcal{L}=-\Delta+\frac{R}{4}+V
\]
on $L^2(M)$ has discrete spectrum, and its lowest
eigenvalue satisfies
    
\begin{equation}\label{lowbound}
 \lambda_0(\mathcal{L})\geq -\frac{1}{4\tau}\ln\int_M e^{-4\tau V}\, dv + \frac{\mu(g,\tau)}{4\tau}+\frac{n}{4\tau}\left(1+\frac{1}{2}\ln(4\pi\tau)\right),  
\end{equation}
where $\mu(g,\tau)$ is Perelman’s $\mu$-functional defined by \eqref{mu}.

Moreover, equality in \eqref{lowbound} holds if and only if  $\mu(g,\tau)$ is attained by $4\tau V+C$, where $C=\ln\left( (4\pi\tau)^{-\frac{n}{2}}\int_M e^{-4\tau V} \, d\upsilon\right).$

\end{theorem}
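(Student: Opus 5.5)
The plan is to compare the Rayleigh quotient of $\mathcal{L}$ with Perelman's $\mathcal{W}$-functional, and then use the Gibbs (Jensen) inequality to bring in $V$. Recall Perelman's entropy
\[
\mathcal{W}(g,\phi,\tau)=\int_M\Big[\tau(|\nabla\phi|^2+R)+\phi-n\Big](4\pi\tau)^{-\frac n2}e^{-\phi}\,dv
\]
and $\mu(g,\tau)=\inf\mathcal{W}$ over $\phi$ with $\int_M(4\pi\tau)^{-n/2}e^{-\phi}dv=1$.

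\textbf{Discreteness.} Since $R$ is bounded below, $R/4+V$ is bounded below and in $L^\infty_{\mathrm{loc}}$. In the non-compact case it is also confining, because $V$ is. The spectral fact quoted in the introduction (\cite{L}) then gives a discrete spectrum, and $\lambda_0(\mathcal{L})$ is the infimum of the Rayleigh quotient.

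\textbf{Reduction to $\mathcal{W}$.} Take $u$ in the form domain (for instance compactly supported, then approximate) with $\int_M u^2=1$. Write $u^2=(4\pi\tau)^{-n/2}e^{-\phi}$, so that $|\nabla u|^2=\frac14|\nabla\phi|^2u^2$. Then
\[
4\tau\int_M\Big(|\nabla u|^2+\tfrac R4u^2+Vu^2\Big)
=\mathcal{W}(g,\phi,\tau)+n-\int_M\phi u^2+4\tau\int_M Vu^2 .
\]
This uses $\int_M\tau(|\nabla\phi|^2+R)u^2=4\tau\int_M\big(|\nabla u|^2+\frac R4 u^2\big)$. The right side is at least
\[
\mu(g,\tau)+n+\int_M\big(4\tau V-\phi\big)u^2 .
\]

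\textbf{The entropy term.} We have $-\phi=\ln u^2+\frac n2\ln(4\pi\tau)$. Let $\nu=e^{-4\tau V}dv/Z$ with $Z=\int_M e^{-4\tau V}dv$. Then
\[
\int_M(4\tau V-\phi)u^2=\int_M u^2\ln\frac{u^2}{e^{-4\tau V}}+\frac n2\ln(4\pi\tau)
=\int_M u^2\ln\frac{u^2\,dv}{d\nu}-\ln Z+\frac n2\ln(4\pi\tau).
\]
By Jensen's inequality the relative entropy term is $\ge0$, with equality iff $u^2=e^{-4\tau V}/Z$. Dividing by $4\tau$ gives \eqref{lowbound}.

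\textbf{Technical points.} Functions with zeros need care: either approximate with $u^2+\epsilon$ renormalized, or use the standard equivalence of $\mu$ with the infimum of the $u$-form of $\mathcal W$ over $H^1$ functions. One also has to check that $\int Vu^2$ and $\int u^2\ln u^2$ make sense; they may be $+\infty$, which only helps. On non-compact $M$ the definition of $\mu$ in \eqref{mu} should be the one over suitable $W^{1,2}$ functions, and I assume it matches this $u$-formulation.

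\textbf{Equality.} This is the main obstacle. If equality holds, take the ground state $u_0>0$, which exists by discreteness. Both inequalities are then equalities for $u_0$. Jensen forces $u_0^2=e^{-4\tau V}/Z$, i.e. $\phi=4\tau V+\ln\big((4\pi\tau)^{-n/2}Z\big)=4\tau V+C$, and $\mathcal{W}(g,4\tau V+C,\tau)=\mu(g,\tau)$.

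Conversely, suppose $\mu$ is attained by $\phi=4\tau V+C$. Set $u=(4\pi\tau)^{-n/4}e^{-\phi/2}=e^{-2\tau V}/\sqrt Z$. For this $u$ both inequalities are equalities, so the Rayleigh quotient equals the bound, and hence $\lambda_0$ is at most the bound. Together with the inequality, equality follows.

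The delicate part is justifying that the minimizer (respectively the ground state) lies in the appropriate domain on non-compact $M$, so that the integration by parts behind $|\nabla u|^2=\frac14|\nabla\phi|^2u^2$ in integrated form is valid. I would handle this with cutoff functions and the confining assumption, which gives decay of $u_0$.
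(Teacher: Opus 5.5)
Your proposal is correct and follows the paper's proof. You rewrite $4\tau$ times the Rayleigh quotient through Perelman's functional and use $\mathcal{W}\ge\mu(g,\tau)$. You then apply the Gibbs principle, which is your relative-entropy/Jensen step. Equality is read off from the ground state, and the converse comes from testing with $e^{-2\tau V}/\sqrt{Z}$.

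The paper simply runs the chain of inequalities on the positive normalized ground state from the start. That makes your approximation of test functions with zeros unnecessary; note also that $u^2+\epsilon$ would not be integrable on an infinite-volume $M$ anyway.
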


Observe that  \eqref{e1} can be recovered from Theorem \ref{mainteo2} by using the fact that on a  Ricci shrinker one has $\mu(g,\tau)=\mu_s$.

\vskip 2mm

The second operator we consider is the drifted Schrödinger operator on a  complete smooth metric measure space $(M,g,f)$:
$$-\Delta_f+V,$$
acting on $L^2(M,e^{-f}d\upsilon)$, where $V\in L^{\infty}_{\textrm{loc}}(M)$ is real-valued and bounded below. 
Recall that   the {\it drifted Laplacian} is defined by
$$\Delta_f=\Delta-\langle \nabla f, \nabla \cdot \rangle.$$
The operator $\Delta_f$ is  essentially self-adjoint  on  $L^2(M,e^{-f}d\upsilon)$.
Its spectrum properties have been studied in various settings; see, for example, Cheng and Zhou \cite{chengzhou}. For $-\Delta_f+V$, we obtain  the following result on Ricci shrinkers.

\begin{theorem}\label{mainteo3}
Let $(M,g,f,\tau)$ be an $n$-dimensional complete Ricci shrinker such that  $d\mu=(4\pi\tau)^{-\frac{n}{2}}e^{-f}d\upsilon$ is a probability measure, and let $V$  satisfy $e^{-4\tau V}\in L^1(M,d\mu)$. If $M$ is compact, or if $M$ is non-compact and $V$ is  confining, then 
$$\mathcal{L}_f=-\Delta_f+V$$ 
on $L^2(M,d\mu)$ has discrete spectrum, and its lowest eigenvalue satisfies
    \begin{equation}\label{fzestimate3}\lambda_0(\mathcal{L}_f)\geq -\frac{1}{4\tau}\ln\int_M e^{-4\tau V}\, d\mu.\end{equation}
Moreover, equality holds if and only if $V\in C^{\infty}(M)$ with $\nabla^2V=0$.
\end{theorem}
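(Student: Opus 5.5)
The plan is to combine the Bakry--\'Emery logarithmic Sobolev inequality for the weighted measure $d\mu$ with the Gibbs (entropy) variational inequality, and then apply both to the first eigenfunction.

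\emph{Discreteness.} Conjugate by the unitary map $L^2(M,d\mu)\to L^2(M,dv)$, $u\mapsto (4\pi\tau)^{-n/4}e^{-f/2}u$. This turns $-\Delta_f+V$ into $-\Delta+W$ with
\[
W=V+\tfrac14|\nabla f|^2-\tfrac12\Delta f .
\]
The shrinker identities, with $f$ normalized, give $\Delta f=\frac{n}{2\tau}-R$ and $|\nabla f|^2+R=\frac{f}{\tau}$. Together with $R\ge 0$ and the growth estimate $f\sim d^2/(4\tau)$ of Cao--Zhou, these show that $\tfrac14|\nabla f|^2-\tfrac12\Delta f$ is bounded below. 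Hence $W$ is bounded below, and it is confining when $V$ is. The discreteness criterion quoted in the introduction then applies, and $\lambda_0$ is attained by a positive smooth eigenfunction $u$ with $\int_M u^2\,d\mu=1$.

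\emph{The inequality.} Since $\mathrm{Ric}_f=\mathrm{Ric}+\nabla^2 f=\frac{1}{2\tau}g$ and $d\mu$ is a probability measure, the Bakry--\'Emery criterion (valid on complete manifolds) gives
\[
\int_M u^2\ln u^2\,d\mu\le 4\tau\int_M|\nabla u|^2\,d\mu\qquad\text{for }\int_M u^2\,d\mu=1 .
\]
Jensen's inequality applied to the probability measure $u^2d\mu$ and the function $e^{-4\tau V}/u^2$ gives the Gibbs inequality
\[
-4\tau\int_M Vu^2\,d\mu\le \int_M u^2\ln u^2\,d\mu+\ln\int_M e^{-4\tau V}\,d\mu .
\]
Adding the first inequality to the second divided by $4\tau$ cancels the entropy terms and yields
\[
\int_M\bigl(|\nabla u|^2+Vu^2\bigr)\,d\mu\ge -\tfrac{1}{4\tau}\ln\int_M e^{-4\tau V}\,d\mu .
\]
Applying this to the eigenfunction $u$, whose Rayleigh quotient is $\lambda_0$, gives \eqref{fzestimate3}. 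The integrability needed to justify these steps on noncompact $M$ should come from the confining assumption and cutoff arguments.

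\emph{Equality.} Equality in Gibbs forces $u^2=e^{-4\tau V}/\int_M e^{-4\tau V}\,d\mu$, so $V=-\frac{1}{2\tau}\ln u+\text{const}$ is smooth. Equality in the log-Sobolev inequality must then be analyzed. I would redo the Bakry--\'Emery proof along the heat semigroup $P_t$ of $\Delta_f$, or equivalently via the integrated Bochner formula
\[
\tfrac12\Delta_f|\nabla h|^2=|\nabla^2h|^2+\langle\nabla h,\nabla\Delta_f h\rangle+\tfrac{1}{2\tau}|\nabla h|^2 .
\]
Equality should force $\nabla^2\ln u=0$, hence $\nabla^2 V=0$; this matches the known rigidity that log-Sobolev extremals on $\mathrm{Ric}_f\ge K$ spaces split off a Gaussian line.

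Conversely, suppose $\nabla^2V=0$. Then $\nabla V$ is parallel, so $\mathrm{Ric}(\nabla V)=0$ and $\nabla^2f(\nabla V)=\frac{1}{2\tau}\nabla V$. It follows that $\langle\nabla f,\nabla V\rangle-\frac{V}{2\tau}$ is constant, and that $M$ splits off a Gaussian line in the direction of $\nabla V$. Setting $u=ce^{-2\tau V}$, a direct computation shows
\[
-\Delta_f u+Vu=\Bigl(-4\tau^2|\nabla V|^2-2\tau\langle\nabla f,\nabla V\rangle+V\Bigr)u=\text{const}\cdot u .
\]
So $u$ is a positive eigenfunction, hence the first one. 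Evaluating $\int_M e^{-4\tau V}d\mu$ by a one-dimensional Gaussian integral along the split line then gives equality.

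The main obstacle is the necessity direction of the equality case on noncompact $M$. There one needs either a rigorous semigroup or Bochner argument with enough decay of $u$, obtained from confinement and Agmon-type estimates, or an appeal to a known characterization of extremals of the log-Sobolev inequality under $\mathrm{Ric}_f\ge\frac{1}{2\tau}$.
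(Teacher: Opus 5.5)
Your proposal is correct and is essentially the paper's argument. The paper routes through Theorem \ref{mainteo1} via the conjugation $\tilde V=V-\frac{R_f}{4}$, but unwinding that proof gives exactly your log-Sobolev plus Gibbs chain on $L^2(M,d\mu)$. The step you flag as the main obstacle is closed in the paper simply by citing Conrado's rigidity result (Proposition \ref{lsi}), which yields $u\in C^\infty(M)$ and $\nabla^2\ln u^2=0$, and for the converse the paper uses Lemma \ref{rlow} for the potential function $4\tau\tilde V=f+4\tau V-(n+\mu_s)$ instead of your explicit ground state $e^{-2\tau V}$.
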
 
\begin{remark}\label{remav} The rigidity in Theorem \ref{mainteo3} can be formulated more explicitly. Observe that, $V\in C^{\infty}(M)$ with $\nabla^2V=0$ if and only if one of the following alternatives holds:
\begin{enumerate}
    \item  $V$ is a constant function, or
    \item $M$ is isometric to $\Sigma\times \mathbb{R}^k$, where $\Sigma$ is an $(n-k)$-dimensional complete Riemannian manifold, and by passing an isometry, for $(x,y)\in \Sigma\times \mathbb{R}^k$, $V$ satisfies
 $$V(x,y)=\frac{1}{4\tau}\langle y,b\rangle + \textrm{constant}, \ \text{ for some } b\in \mathbb{R}^k. $$
\end{enumerate}
In particular, see Theorem \ref{t22v2} for the compact case.
\end{remark}
\begin{remark}
  Compared with Theorem \ref{mainteo1}, Theorem \ref{mainteo3} provides a  lower bound depending only on $V$. Its rigidity statement  reflects the affinity of the optimal potentials, characterized by  $\nabla^2 V=0$.
\end{remark}

Finally, we give a lower bound for the bottom of spectrum of $-\Delta_f+V$  in terms of Perelman's $\mu$-functional on the smooth metric measure space $(M,g,f)$; see Theorem \ref{mainteo4}.

\begin{remark}
Theorems \ref{mainteo1}, \ref{mainteo2}, \ref{mainteo3}, and \ref{mainteo4} may be viewed as  illustrations of  the interaction between Schr\"odinger operators and Perelman’s entropy functional.

\end{remark}

The rest of the paper is organized as follows. In Section 2, we fix notation and recall some definitions and results used throughout the paper. In Section 3, we prove the theorems.

\section{Preliminaries}

\subsection{Smooth metric measure spaces}\hfill\break\vspace{-3mm}

A \textit{complete smooth metric measure space} $(M, g, f)$  is a complete connected $n$-dimensional Riemannian manifold $(M,g)$ equipped with a smooth measure $e^{-f}d\upsilon$, where $f$ is a smooth function on $M$ and $d\upsilon$ denotes the Riemannian volume measure associated with $g$. 

\vskip 2mm
If  the \textit{Bakry-\'Emery Ricci curvature} of $(M,g,f)$ satisfies
$$\textrm{Ric}_f=\textrm{Ric}+\nabla^2f\geq \frac{1}{2\tau} g,$$
 for some constant $\tau>0$, then it follows from \cite{Morgan} that  the weighted volume $\int_M e^{-f} \, d\upsilon $ is finite. Hence,  after normalizing $\int_M(4\pi\tau)^{-\frac{n}{2}}e^{-f}d\upsilon=1$, the logarithmic Sobolev inequality of Bakry-Émery \cite{BE} holds on $M$:
\begin{equation}\label{log}
  \int_M u^2 \ln u^2 d\mu\leq 4\tau \int_M |\nabla u|^2 \, d\mu,  
\end{equation}
for all   $u\in H^1(M,d\mu)$ with $\int_M u^2 \, d\mu=1$, where  $d\mu=(4\pi\tau)^{-\frac{n}{2}}e^{-f}d\upsilon$.

Recently, Conrado \cite{Conrado} studied the associated rigidity problem in \eqref{log} and proved that 
\begin{proposition}\label{lsi}
   If the equality in \eqref{log} holds for some function $u$, then $u\in C^{\infty}(M)$ and $$\nabla^2 (\ln u^2)=0.$$ 
  
\end{proposition}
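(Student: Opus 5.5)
The plan is to show that an extremal function $u$ is a critical point of the log-Sobolev deficit functional. This gives a weighted elliptic equation. Elliptic regularity then makes $u$ smooth and positive, and a Bochner-type argument using $\textrm{Ric}_f\geq \frac{1}{2\tau}g$ forces $w=\ln u^2$ to have vanishing Hessian.

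First, since $|u|$ is also extremal (because $|\nabla|u||=|\nabla u|$ a.e.), we may assume $u\geq 0$. Set
$$\Phi(v)=4\tau\int_M|\nabla v|^2d\mu-\int_M v^2\ln v^2\,d\mu+\Big(\int_M v^2d\mu\Big)\ln\int_M v^2d\mu .$$
The log-Sobolev inequality \eqref{log}, extended by homogeneity, says $\Phi\geq 0$, and $\Phi(u)=0$. So $u$ minimizes $\Phi$, and the Euler--Lagrange equation gives, weakly,
$$-4\tau\Delta_f u = u\ln u^2 + c\,u,$$
for a constant $c$ coming from the normalization. The term $u\ln u^2$ is only mildly nonlinear. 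A local Moser iteration (or De Giorgi--Nash) shows that $u$ is locally bounded and Hölder continuous, and bootstrapping then gives $u\in C^\infty$. The Harnack inequality for the nonnegative supersolution $u$ of $-4\tau\Delta_f u+(\,c+\ln u^2)^{-}u\geq0$-type gives $u>0$ everywhere, since $u\not\equiv0$. I expect this positivity step to be delicate because $\ln u^2$ is unbounded below near zeros. To handle it, I would write the equation as $-4\tau\Delta_f u + a(x)u=0$ with $a=-\ln u^2-c$. Here $a$ is bounded below wherever $u$ is small, so $u$ is a supersolution of an equation with potential bounded above, and the strong minimum principle applies.

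Next, with $u>0$ smooth, set $w=\ln u^2$, so $u^2\,d\mu=e^{w}d\mu$. Equality in the Bakry--Émery argument is the crux. I would rederive \eqref{log} via the $\Gamma_2$ (Bochner) method along the Ornstein--Uhlenbeck-type semigroup $P_t=e^{t\Delta_f}$, applied to $F=u^2$. The standard proof writes the deficit as
$$\int_0^\infty \Big(\int_M\frac{|\nabla P_tF|^2}{P_tF}d\mu\,e^{-t/\tau}\ \text{-type integrals}\Big)\,dt .$$
Differentiating the Fisher information $I(P_tF)$ in $t$ and using Bochner with $\textrm{Ric}_f\geq\frac1{2\tau}g$ gives
$$\frac{d}{dt}I(P_tF)\leq -\frac{1}{\tau}I(P_tF)-2\int_M P_tF\,|\nabla^2\ln P_tF|^2d\mu .$$
Equality in \eqref{log} forces equality along the whole flow, and in particular the Hessian term must vanish at $t=0$. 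This yields $\int_M u^2|\nabla^2 w|^2d\mu=0$, hence $\nabla^2 w=0$, because $u>0$.

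The main obstacle is justifying this semigroup computation on a noncompact $M$. One needs enough integrability to differentiate under the integral, to integrate by parts, and to take $t\to0^+$ and $t\to\infty$, where $P_tF\to\int F\,d\mu$ requires ergodicity. I would rely on the finiteness of the weighted volume (from \cite{Morgan}), cut-off functions, and a smooth approximation of $F$ by functions bounded away from zero, for example $F+\varepsilon$ with compactly supported truncations. Lower semicontinuity then transfers the vanishing of the Hessian term to the limit.

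As an alternative for the final step that avoids the semigroup, I would use the smooth positive equation for $w$ directly:
$$4\tau(\Delta_f w+\tfrac12|\nabla w|^2)=-2w-2c .$$
I would integrate the weighted Bochner formula for $w$ against $e^{w}d\mu$ and show that the remainder is $\int|\nabla^2 w|^2e^w d\mu+\int(\textrm{Ric}_f-\frac1{2\tau}g)(\nabla w,\nabla w)e^wd\mu=0$. Both terms are nonnegative, so both vanish.
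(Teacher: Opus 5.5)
The paper gives no proof of this proposition (it is quoted from Conrado's work), so your argument has to stand on its own. Much of it does: the Euler--Lagrange equation, local regularity up to $C^{2,\alpha}$, and above all your ``alternative'' last step. Multiply the weighted Bochner formula by $\eta^2e^{w}$ for a cut-off $\eta$ and insert the equation for $w$. The terms $\nabla^2w(\nabla w,\nabla w)$ cancel exactly, and you get
\[
\int_M \eta^2 e^{w}\Big(|\nabla^2 w|^2+\big(\textrm{Ric}_f-\tfrac{1}{2\tau}g\big)(\nabla w,\nabla w)\Big)\,d\mu=-2\int_M \eta\, e^{w}\,\nabla^2 w(\nabla w,\nabla\eta)\,d\mu .
\]
After absorbing the right-hand side, the only thing needed on noncompact $M$ is $\int_M|\nabla w|^2e^{w}\,d\mu=4\int_M|\nabla u|^2\,d\mu<\infty$. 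The semigroup route is therefore unnecessary. As written, it evaluates the Hessian term at $t=0$, so it presupposes the same regularity and positivity anyway.

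\textbf{The gap is positivity of $u$.} Everything after the Euler--Lagrange equation rests on it: $\ln u^2$ has to make sense, and even the $C^\infty$ bootstrap needs $u>0$, because $s\mapsto s\ln s^2$ is not $C^1$ at $s=0$.

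Your argument for positivity has the sign backwards. Near a zero of $u$ the coefficient $a=-\ln u^2-c$ tends to $+\infty$. A lower bound $a\ge a_0$ only gives $-4\tau\Delta_f u+a_0u\le 0$, so $u$ is a \emph{sub}solution, which yields no lower bound for $u$. A supersolution inequality $-4\tau\Delta_f u+Au\ge 0$ would need $a\le A$, and that fails exactly where $u$ is small. For the same reason the weak Harnack inequality for $-4\tau\Delta_f u+(c+\ln u^2)^-u\ge 0$ is not available: it requires the potential to lie in $L^q_{\mathrm{loc}}$, $q>n/2$, near the putative zero, which is what you are trying to prove.

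Here $u\ln u^2$ is an \emph{absorption} term, and for absorption no soft argument can work. For instance, $v''=v^q$ with $0<q<1$ has the nonnegative solution $v=Cx_+^{2/(1-q)}$ (suitable $C>0$), which has a dead core.

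What saves the statement is the precise logarithmic growth, through a V\'azquez-type strong maximum principle. Your normalization forces $c=0$, so near a zero $\Delta_f u=\beta(u)$ with $\beta(s)=\frac{1}{2\tau}s\ln(1/s)$. This $\beta$ is nondecreasing on $[0,e^{-1}]$, and $j(s)=\int_0^s\beta\sim\frac{1}{4\tau}s^2\ln(1/s)$, so $\int_0 j(s)^{-1/2}\,ds=\infty$. This Osgood-type condition is exactly what rules out interior zeros of a nontrivial nonnegative $C^1$ solution. The result is due to V\'azquez, and to Pucci--Serrin for operators with first-order terms such as $\langle\nabla f,\nabla\,\cdot\,\rangle$.

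With this you get $|u|>0$. Since $u$ itself also satisfies the Euler--Lagrange equation and is continuous, it has constant sign on the connected manifold $M$. The bootstrap then gives $u\in C^\infty(M)$, and your Bochner identity completes the proof.
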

    
 We study two Schrödinger-type operators:$$-\Delta+\frac{R}{4}+V\ \text{ and } \ -\Delta_f+V,$$ where $R$ denotes the scalar curvature of $M$.  Throughout the paper, we assume  that  $V\in L^{\infty}_{\textrm{loc}}(M)$ is real-valued and bounded below, unless otherwise specified.
 \begin{remark}
      For a complete noncompact manifold \(M\), we call a function \(V\) confining if it satisfies \eqref{confin}. This condition is imposed only to ensure that \(-\Delta+V\) has compact resolvent and hence discrete spectrum. Other notions of confining potentials are also possible, provided that they imply that \(-\Delta+V\) has compact resolvent. 
 \end{remark}

\subsection{Perelman's $\mathcal{W}$-functional}\label{subsec2} \hfill\break\vspace{-3mm}

In this subsection, we recall some facts about Perelman's $\mathcal{W}$-functional. For more details, see \cite{RF1} and also \cite{RFT}.
\vskip 2mm

Let $M$ be an $n$-dimensional smooth manifold. Denote by $\mathcal{R}(M)$ the space of all Riemannian metrics on $M$.  {\it Perelman's $\mathcal{W}$-functional} is defined  by
$$
\mathcal{W}(g,f,\tau)=\int_M [\tau (R+|\nabla f|^2)+f-n](4\pi
\tau)^{-\frac{n}{2}}e^{-f}d\upsilon,
$$
where $g\in \mathcal{R}(M)$, $f\in C^{\infty}(M)$ and $\tau>0$.

\vskip 2mm

The $\mathcal{W}$-functional has the following  properties:

\begin{enumerate}
\item[(i)](Scale invariance) 
$\mathcal{W}(cg, f, c\tau)=\mathcal{W}(g, f, \tau), \textrm{ for all } c>0.$
\item [(ii)](Diffeomorphism invariance) 
$$\mathcal{W}(g, f, \tau)=\mathcal{W}(\Phi^*g, \Phi^*f, \tau), \textrm{ for any diffeomorphism } \Phi.$$
\end{enumerate}
Define 
$$\Omega(M)=\left\{(g,f,\tau)\in \mathcal{R}(M)\times C^{\infty}(M)\times \mathbb{R}^+; \ \int_M(4\pi\tau)^{-\frac{n}{2}}e^{-f}d\upsilon_g=1\right\}.$$
We consider the restriction of $\mathcal{W}$ to $\Omega(M)$. The associated {\it $\mu$-functional} and {\it $\nu$-functional}  are defined by
\begin{equation}\label{mu}\mu(g,\tau)=\inf \left\{\mathcal{W}(g,f,\tau);  \ (g,f,\tau)\in\Omega(M)\right\},
\end{equation}
\begin{equation}\label{nu}
    \nu(g)=\inf\left\{\mu(g,\tau); \ \tau>0\right\}.
\end{equation}
The functionals $\mu$ and $\nu$ inherit scale and diffeomorphism invariance properties.
\vskip 2mm
For $g\in \mathcal{R}(M)$, $\phi\in C^{\infty}(M)$ and $\tau>0$, we define
\begin{equation*}\label{perelman} 
  \mathcal{K}(g,\phi,\tau)=\int_M\left(4\tau|\nabla \phi|^2+\tau R\phi^2 - \phi^2\ln \phi^2\right) \, d\upsilon
-n\left(1+\frac{1}{2}\ln(4\pi\tau)\right).  
\end{equation*}
Then,
\begin{equation}\label{wk}
    \mathcal{W}(g,f,\tau)=\mathcal{K}(g,\phi,\tau), \text{ where } \phi^2=(4\pi\tau)^{-\frac{n}{2}}e^{-f}.
\end{equation}
Therefore
\begin{equation}\label{perel}
    \mu(g,\tau)=\inf \left\{\mathcal{K}(g,\phi,\tau); \ \phi\in C^{\infty}(M) \ \text{with} \ \int_M \phi^2 \, d\upsilon=1\right\}.
\end{equation}

The next lemma characterizes the critical points of \eqref{mu}, and in particular its minimizers.
\begin{lemma}\label{el} The Euler-Lagrange equation of (\ref{mu}) is
\begin{equation}\label{eullag}
    \tau(R+2\Delta f-|\nabla f|^2)+f-n= \textrm{const}.
\end{equation}
If $f$ is a minimizer of (\ref{mu}), then
\begin{equation}\label{min}
 \tau(R+2\Delta f-|\nabla f|^2)+f-n=\mu(g,\tau).   
\end{equation}
\end{lemma}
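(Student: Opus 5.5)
We derive the Euler--Lagrange equation by a first variation of $\mathcal{W}(g,\cdot,\tau)$ restricted to the constraint set $\Omega(M)$. We then identify the Lagrange multiplier with $\mu(g,\tau)$ by integrating against the weighted measure. It is cleanest to work with the $\phi$-formulation \eqref{wk}--\eqref{perel}, where the constraint $\int_M\phi^2\,d\upsilon=1$ is quadratic and the functional $\mathcal{K}$ is explicit.

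\textbf{Step 1 (variation).} Let $\phi_s=\phi+s\psi$ with $\psi$ compactly supported, or arbitrary if $M$ is compact. Differentiating at $s=0$ gives
\[
\frac{d}{ds}\Big|_{s=0}\mathcal{K}(g,\phi_s,\tau)=\int_M\big(8\tau\langle\nabla\phi,\nabla\psi\rangle+2\tau R\phi\psi-2\phi\psi\ln\phi^2-2\phi\psi\big)\,d\upsilon .
\]
The variation of the constraint is $2\int_M\phi\psi\,d\upsilon$. At a critical point there is a constant $\lambda$ with
\[
-4\tau\Delta\phi+\tau R\phi-\phi\ln\phi^2-\phi=\lambda\phi .
\]

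\textbf{Step 2 (translate to $f$).} Substitute $\phi=(4\pi\tau)^{-n/4}e^{-f/2}$. Then $\nabla\phi=-\tfrac12\phi\nabla f$ and $\Delta\phi=\phi\big(-\tfrac12\Delta f+\tfrac14|\nabla f|^2\big)$, and $\ln\phi^2=-f-\tfrac n2\ln(4\pi\tau)$. Dividing the equation by $\phi>0$ gives
\[
\tau(2\Delta f-|\nabla f|^2+R)+f-n=\text{const},
\]
where all constants have been absorbed, including $n$ and $\tfrac n2\ln(4\pi\tau)$. This is \eqref{eullag}.

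\textbf{Step 3 (value of the constant).} Suppose $f$ is a minimizer and write $\tau(R+2\Delta f-|\nabla f|^2)+f-n=c$. Integrate both sides against $d\mu=(4\pi\tau)^{-n/2}e^{-f}d\upsilon$, which has total mass $1$. The weighted integration by parts
\[
\int_M\Delta f\,e^{-f}\,d\upsilon=\int_M|\nabla f|^2e^{-f}\,d\upsilon
\]
turns $2\Delta f-|\nabla f|^2$ into $|\nabla f|^2$ under the integral. The left side therefore becomes exactly $\mathcal{W}(g,f,\tau)=\mu(g,\tau)$, so $c=\mu(g,\tau)$, which is \eqref{min}.

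\textbf{Main obstacle.} The only delicate point is justifying the integrations by parts when $M$ is non-compact. In Step 1 this is harmless, because the test functions $\psi$ are compactly supported. In Step 3 one needs enough decay of $e^{-f}|\nabla f|$ to discard boundary terms. I would handle this with a cutoff argument: multiply by a cutoff $\eta_r$, integrate by parts, and let $r\to\infty$. This uses the finiteness of $\mathcal{W}$, i.e.\ $\phi\in H^1$ with $\int_M\phi^2\ln\phi^2\,d\upsilon$ finite, together with the identity $\int_M|\nabla\phi|^2\,d\upsilon=\tfrac14\int_M|\nabla f|^2\phi^2\,d\upsilon$. In the $\phi$-language the identity in Step 3 reads $4\tau\int_M|\nabla\phi|^2\,d\upsilon=-4\tau\int_M\phi\Delta\phi\,d\upsilon$, which is the standard statement for $H^1$ eigenfunction-type solutions.
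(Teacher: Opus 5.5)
The paper states this lemma without proof, recalling it as a standard fact from Perelman's work, so there is no argument in the paper to compare against. Your derivation is the standard one and it is correct.

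The constrained first variation in the $\phi$-formulation gives $-4\tau\Delta\phi+\tau R\phi-\phi\ln\phi^2-\phi=\lambda\phi$. This variation is admissible: for small $|s|$, $\phi+s\psi>0$ because $\phi>0$ on the compact support of $\psi$. The substitution $\phi=(4\pi\tau)^{-n/4}e^{-f/2}$ turns this into \eqref{eullag}. Integrating against $d\mu$, using $\int_M\Delta f\,e^{-f}\,d\upsilon=\int_M|\nabla f|^2e^{-f}\,d\upsilon$, identifies the constant as $\mathcal{W}(g,f,\tau)=\mu(g,\tau)$.

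Your cutoff justification also works. Multiply the equation $\tau(R+2\Delta f-|\nabla f|^2)+f-n=c$ by $\eta_r$ and integrate by parts against $d\mu$ to get
\[
c\int_M\eta_r\,d\mu=\int_M\eta_r\bigl[\tau(R+|\nabla f|^2)+f-n\bigr]\,d\mu-2\tau\int_M\langle\nabla\eta_r,\nabla f\rangle\,d\mu .
\]
Since $\mu(M)=1$, Cauchy--Schwarz bounds the last term by $\frac{C}{r}\bigl(\int_M|\nabla f|^2\,d\mu\bigr)^{1/2}$, which tends to $0$. So you only need $\phi\in H^1$ and integrability of the $\mathcal{W}$-integrand, not a separate integrability statement for $\Delta f\,e^{-f}$. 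The paper tacitly assumes this integrability whenever it speaks of a minimizer.
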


\medskip

Note that we do not assume that the infima $\mu(g,\tau)$ and $\nu(g)$ are finite. But they are finite for Ricci shrinkers; see Theorem \ref{tmrs}.  

\medskip

On compact smooth manifolds, the infimum $\mu(g,\tau)$ is finite and is attained by a smooth function, for any given $g$ and $\tau$.  
On the other hand, there exist compact Riemannian manifolds such that $\nu(g)=-\infty$, as is the case for scalar-flat manifolds.

\subsection{Ricci shrinkers} \hfill\break\vspace{-3mm}

In this subsection, let $(M,g, f,\tau)$ be a Ricci shrinker satisfying \eqref{shrinker}. Then
\begin{equation}\label{eqs}
    \Delta f+R=\frac{n}{2\tau}.
\end{equation}
Moreover, by a result of Hamilton \cite{hamilton}, one has
\begin{equation}\label{hamilton}\tau(R+|\nabla f|^2)-f=C,
\end{equation}
where  $C$ is constant on $M$.

\medskip

By a result of Chen \cite{chen} on complete ancient solutions, a Ricci shrinker has nonnegative scalar curvature (cf. Proposition 5.5 in \cite{Cao2010}). Furthermore, using the maximum principle, Pigola, Rimoldi, and Setti \cite{prs} proved that a Ricci shrinker has positive scalar curvature unless it is isometric to Euclidean space.

In \cite{caozhou}, Cao and Zhou proved the following growth estimate for the potential functions of noncompact Ricci shrinkers.

\begin{theorem}[Cao-Zhou, \cite{caozhou}]\label{CZ1} Let $(M,g,f,\tau)$ be an $n$-dimensional complete noncompact normalized Ricci shrinker such that $f=|\nabla f|^2+R$ and  $\tau=1$. Fix a point $p\in M$ and consider the distance function $r(x)=d(x,p)$. Then
$$\frac{1}{4}(r(x)-c)^2\leq f(x)\leq \frac{1}{4}(r(x)+c)^2$$
\noindent for all $x\in M$ with $r(x)$ sufficiently large, where $c$ is a positive constant depending only on $n$ and the geometry of $g$ on the unit ball $B_1(p)$.
\end{theorem}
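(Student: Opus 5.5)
The plan is to derive two one-sided estimates for $\sqrt{f}$: an upper bound from a gradient bound, and a lower bound from the second variation of arc length along a minimizing geodesic. Both rest on the normalization $f=|\nabla f|^2+R$ and on $R\ge 0$ (Chen's result, recalled above).

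\emph{Upper bound.} Since $R\ge 0$, we have $f\ge |\nabla f|^2\ge 0$. Hence, wherever $f>0$,
\[
|\nabla \sqrt{f}|=\frac{|\nabla f|}{2\sqrt f}\le \frac12 .
\]
To avoid differentiating at zeros of $f$, I would apply this to $\sqrt{f+\epsilon}$ and let $\epsilon\to 0$. Integrating along a minimizing geodesic from $p$ gives
\[
\sqrt{f(x)}\le \sqrt{f(p)}+\tfrac12 r(x).
\]
Squaring yields $f(x)\le\frac14(r(x)+c)^2$ with $c=2\sqrt{f(p)}$. This $c$ is controlled by the geometry near $p$: $f(p)=|\nabla f|^2(p)+R(p)$, and I would accept absorbing such quantities into $c$.

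\emph{Lower bound.} Take $x$ with $r=r(x)\ge 2$, and let $\gamma:[0,r]\to M$ be a unit-speed minimizing geodesic from $p$ to $x$. Choose the cutoff $\phi(t)=t$ on $[0,1]$, $\phi=1$ on $[1,r-1]$, and $\phi(t)=r-t$ on $[r-1,r]$. The second variation formula, applied with $\phi E_i$ for a parallel orthonormal frame $E_i\perp\gamma'$, gives
\[
\int_0^r\phi^2\,\mathrm{Ric}(\gamma',\gamma')\,dt\le (n-1)\int_0^r\phi'^2\,dt=2(n-1).
\]
Substitute $\mathrm{Ric}=\frac12 g-\nabla^2 f$ and use $\nabla^2 f(\gamma',\gamma')=\frac{d}{dt}\langle\nabla f,\gamma'\rangle$. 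Integrating by parts with $\phi(0)=\phi(r)=0$, this becomes
\[
\frac12\int_0^r\phi^2\,dt\le 2(n-1)-\int_0^r 2\phi\phi'\langle\nabla f,\gamma'\rangle\,dt .
\]
The left side is $\frac12(r-\frac43)$. On the right, the contribution of $[0,1]$ is bounded by $\sup_{B_1(p)}|\nabla f|$, which is a local quantity. On $[r-1,r]$ the contribution is at most
\[
\int_{r-1}^r 2(r-t)|\nabla f|\,dt\le \sup_{t\in[r-1,r]}\sqrt{f(\gamma(t))}.
\]
By the Lipschitz bound from the first step, this is at most $\sqrt{f(x)}+\frac12$. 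Hence $\frac r2-C\le\sqrt{f(x)}$, and squaring gives $f(x)\ge\frac14(r-c)^2$ for $r$ large.

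\emph{Main obstacle.} The key step is the bookkeeping at the far endpoint. The gradient of $f$ near $x$ has to be converted into $\sqrt{f(x)}$ itself, and this is done by combining $|\nabla f|\le\sqrt f$ with the $\frac12$-Lipschitz property of $\sqrt f$. This combination is what makes the lower bound close with the sharp coefficient $\frac14$. One must also check that every constant depends only on $n$, on $\sup_{B_1(p)}|\nabla f|$, and on $f(p)$. The latter two are bounded by the geometry on $B_1(p)$, possibly after appealing to the normalization.
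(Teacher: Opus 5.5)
The paper does not prove this statement; it quotes it from Cao--Zhou. Your argument is correct and is essentially their original proof: the upper bound comes from $|\nabla\sqrt f|\le\frac12$, and the lower bound from the second variation with Hamilton's cutoff, where the far-end boundary term is absorbed via $|\nabla f|\le\sqrt f$ and the $\frac12$-Lipschitz bound on $\sqrt f$.

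One small correction to your closing remark: $f(p)$ and $\sup_{B_1(p)}|\nabla f|$ are \emph{not} controlled by $g|_{B_1(p)}$ alone. On the Gaussian shrinker, $f=\frac14|x-b|^2$ is normalized for every $b$, and taking $p=0$ forces $c\ge|b|$. So ``geometry on $B_1(p)$'' has to be read as including the soliton data $f$ near $p$, exactly as in Cao--Zhou.
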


\begin{remark}\label{rCZ1} By Theorem \ref{CZ1}, the potential function of a noncompact Ricci shrinker satisfies that $f(x) \to +\infty$ as $r(x)\to +\infty$. 
\end{remark}

  For Ricci shrinkers,  the functionals $\mu$ and $\nu$ have  the following properties.

\begin{theorem}\label{tmrs}  Let  $(M,g,f,\tau)$ be a  Ricci shrinker such that $(g,f,\tau)\in\Omega(M)$. Then $\nu(g)>-\infty$. Moreover,
\begin{equation}
   \nu(g)=\mu(g,\tau)=\mathcal{W}(g,f,\tau),
\end{equation}
and
\begin{eqnarray}\label{mu1a}
    \mu(g,\tau)
    &=&f-\tau(R+|\nabla f|^2).
\end{eqnarray}
 \end{theorem}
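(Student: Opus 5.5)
The plan has three parts: compute $\mathcal{W}(g,f,\tau)$ explicitly, show that $f$ minimizes $\mathcal{W}(g,\cdot,\tau)$ using the Bakry--Émery logarithmic Sobolev inequality \eqref{log}, and finally compare different scales $\sigma$ to identify $\nu(g)$. The last part is the main obstacle.

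\emph{Step 1: computing $\mathcal{W}(g,f,\tau)$.} Write \eqref{hamilton} as $\tau(R+|\nabla f|^2)-f=C$. Combining it with \eqref{eqs}, i.e.\ $2\tau(R+\Delta f)=n$, gives pointwise
\[
\tau(R+2\Delta f-|\nabla f|^2)+f-n \;=\; n-\tau(R+|\nabla f|^2)+f-n \;=\; -C .
\]
So $f$ satisfies the Euler--Lagrange equation of Lemma \ref{el} with constant $-C$. Next, integrate by parts: $\int_M \Delta f\, e^{-f}d\upsilon=\int_M|\nabla f|^2e^{-f}d\upsilon$. This turns the integrand of $\mathcal{W}$ into the left-hand side above, so with the normalization $(g,f,\tau)\in\Omega(M)$ we get $\mathcal{W}(g,f,\tau)=-C=f-\tau(R+|\nabla f|^2)$. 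In the noncompact case the integration by parts is justified with cutoffs, using the controls coming from Theorem \ref{CZ1} after rescaling to $\tau=1$: $f$ grows quadratically, $R\ge 0$, and $\tau|\nabla f|^2\le f+C$. These make all terms integrable against $e^{-f}$ and kill the boundary terms.

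\emph{Step 2: $\mu(g,\tau)=\mathcal{W}(g,f,\tau)$.} Since $\textrm{Ric}_f=\frac{1}{2\tau}g$, the inequality \eqref{log} holds for $d\mu=\psi^2d\upsilon$, where $\psi^2=(4\pi\tau)^{-n/2}e^{-f}$. Take any $\phi\in C^\infty_c(M)$ with $\int\phi^2d\upsilon=1$ (by density this suffices in \eqref{perel}) and write $\phi=u\psi$, so $\int u^2d\mu=1$. Expand $4\tau|\nabla(u\psi)|^2$ and integrate the cross term $4\tau\int u\psi\langle\nabla u,\nabla\psi\rangle$ by parts, using $\nabla\psi=-\tfrac12\psi\nabla f$. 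Then insert the pointwise identity of Step 1. I expect the computation to give
\[
\mathcal{K}(g,\phi,\tau)-\mathcal{W}(g,f,\tau)=4\tau\int_M|\nabla u|^2d\mu-\int_M u^2\ln u^2\,d\mu\;\ge\;0 .
\]
Hence $\mu(g,\tau)=\mathcal{W}(g,f,\tau)=-C$, which is finite, and \eqref{mu1a} follows.

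\emph{Step 3: $\nu(g)=\mu(g,\tau)$, the main obstacle.} We must show $\mu(g,\sigma)\ge\mu(g,\tau)$ for every $\sigma>0$. Step 2 plus a naive interpolation does not close, since it leaves an uncontrolled term $(\sigma-\tau)\int(4|\nabla\phi|^2+R\phi^2)$. Instead I would use the self-similar Ricci flow generated by the shrinker, $g(t)=(1-t/\tau)\Phi_t^*g$. Scale and diffeomorphism invariance give $\mu(g(t),s)=\mu\big(g,s/(1-t/\tau)\big)$. Perelman's monotonicity says $t\mapsto\mu(g(t),s-t)$ is nondecreasing, hence
\[
\mu(g,s)\le\mu\Big(g,\tfrac{s-t}{1-t/\tau}\Big).
\]
The map $s\mapsto\frac{s-t}{1-t/\tau}$ fixes $\tau$ and, for $0<t<\min(s,\tau)$, moves every $s\neq\tau$ strictly away from $\tau$. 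So $\sigma\mapsto\mu(g,\sigma)$ is nonincreasing on $(0,\tau]$ and nondecreasing on $[\tau,\infty)$, and its minimum is at $\sigma=\tau$. Therefore $\nu(g)=\mu(g,\tau)>-\infty$.

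In the compact case this is standard. In the noncompact case the monotonicity of $\mu$ along the flow has to be justified. Here the explicit quadratic growth of $f$ from Theorem \ref{CZ1}, the inequality $R\ge0$, and the fact that the flow is just a rescaling plus the diffeomorphisms $\Phi_t$ generated by $\nabla f$ should let one run Perelman's argument with cutoff functions. Failing that, I would invoke the known noncompact treatments, by Carrillo--Ni and Li--Li--Wang.
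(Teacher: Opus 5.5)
Your proposal has a genuine gap in Step 3; Steps 1 and 2 are correct. For context, the paper does not prove Theorem~\ref{tmrs} at all. It quotes the result and only notes that \eqref{mu1a} follows by inserting \eqref{eqs} into \eqref{min}, which already presupposes that $f$ is a minimizer. Your Steps 1 and 2 are more self-contained than that. The identity $\mathcal{K}(g,u\psi,\tau)-\mathcal{W}(g,f,\tau)=4\tau\int_M|\nabla u|^2d\mu-\int_M u^2\ln u^2\,d\mu$ is exactly the computation \eqref{mu3} in the proof of Theorem~\ref{mainteo4}, combined with $\tau R_f+f-n\equiv -C$. So \eqref{log} does give $\mu(g,\tau)=\mathcal{W}(g,f,\tau)=f-\tau(R+|\nabla f|^2)$.

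\textbf{The gap.} The map $h_t(s)=\frac{s-t}{1-t/\tau}$ fixes $\tau$. Hence the inequality $\mu(g,s)\le\mu(g,h_t(s))$ never compares $\mu(g,\tau)$ with $\mu(g,s)$ for any $s\neq\tau$.
\begin{itemize}
\item What it yields is monotonicity on the \emph{open} intervals $(0,\tau)$ and $(\tau,\infty)$ only.
\item To include the endpoint you need $\mu(g,\tau)\le\liminf_{s\to\tau}\mu(g,s)$. This is not automatic: $\sigma\mapsto\mu(g,\sigma)$ is an infimum of the continuous functions $\sigma\mapsto\mathcal{K}(g,\phi,\sigma)$, so a priori it is only upper semicontinuous, which is the wrong direction.
\item For example, a function equal to $\mu(g,\tau)-1$ away from $\tau$ is consistent with all the monotonicity you derived.
\end{itemize}

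\textbf{The case $s>\tau$} is easy to repair. Since
\[
\mathcal{K}(g,\phi,s)=\mathcal{K}(g,\phi,\tau)+(s-\tau)\int_M(4|\nabla\phi|^2+R\phi^2)\,d\upsilon-\frac n2\ln\frac s\tau
\]
and $R\ge0$, you get $\mu(g,s'')\ge\mu(g,\tau)-\frac n2\ln\frac{s''}{\tau}$ for $s''>\tau$. Then let $s''\downarrow\tau$ in $\mu(g,s)\ge\mu(g,s'')$.

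\textbf{The case $s<\tau$} needs genuinely more input. The middle term above has the wrong sign, and even finiteness of $\mu(g,s)$ just below $\tau$ is not yet known.

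One way to supply it is to apply monotonicity to $\mathcal{W}$ rather than to $\mu$.
\begin{itemize}
\item Evolve a competitor $\phi$ at scale $s$ by the conjugate heat equation from $t=0$ back to $t\to-\infty$, with $\sigma=s-t$, and pull back by the self-similarity.
\item The scale becomes $\tilde\sigma=\frac{(s+|t|)\tau}{\tau+|t|}\uparrow\tau$, and you obtain
\[
\mathcal{K}(g,\phi,s)\ge\mu(g,\tau)-\frac{\tau-s}{s+|t|}\,Q(t)-\frac n2\ln\frac{\tilde\sigma}{\tau}.
\]
\item Here, with $v=(4\pi\sigma)^{-n/2}e^{-w}$ the evolved density, $Q=\sigma\int_M(R+|\nabla w|^2)v\,d\upsilon=\mathcal{W}-\mathcal{N}+\frac n2$ and $\mathcal{N}=\int_M wv\,d\upsilon-\frac n2$.
\item Monotonicity of $\mathcal{W}$ and of $\int_M v\ln v\,d\upsilon$ (the latter uses $R\ge0$) gives $Q(t)=O(\ln|t|)$. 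Letting $t\to-\infty$ yields $\mu(g,s)\ge\mu(g,\tau)$.
\end{itemize}

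In the compact case you can argue more simply. The function $\kappa\mapsto\kappa\,\mu(g,1/\kappa)-\frac n2\kappa\ln\kappa$ is an infimum of affine functions, hence concave. Being finite, it is continuous, which is exactly the missing lower semicontinuity at $\tau$.

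In the noncompact case, the backward conjugate heat flow needs the same analytic justification you already deferred to Carrillo--Ni and Li--Li--Wang. That is also effectively where the paper leaves the whole statement.
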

 
\begin{remark}
    Equation \eqref{mu1a} follows directly from \eqref{min} and \eqref{eqs}.
\end{remark}

For a general Ricci shrinker $(M,g,f,\tau)$,  
we have the finite {\it $f$-volume}
$$V_f(M)=\int_M (4\pi\tau)^{-\frac{n}{2}}e^{-f}d\upsilon.$$
 and define 
\begin{equation}\label{rentropy}
     \mu_{s}:=\ln V_f(M)+f-\tau(R+|\nabla f|^2).
\end{equation}
By  \eqref{hamilton},  $\mu_s$ is constant on $M$.
In particular,  $V_f(M)=1$ if and only if
\begin{equation}\label{rentropy1}
 \mu_s=f-\tau(R+|\nabla f|^2).
\end{equation}
Note that, by \eqref{rentropy},  $\mu_s$ is invariant under translations of the potential function $f$. Hence  by \eqref{mu1a} and \eqref{rentropy1}, $\mu_s$ is the (Perelman) entropy of the Ricci shrinker, that is, 
\begin{equation}
   \mu_s=\mu(g,\tau)=\nu(g). 
\end{equation}

\begin{example}
  The entropy of the Gaussian shrinker is equal to zero.
\end{example}

\begin{example}
Let $M$ be an $n$-dimensional  Einstein manifold of positive scalar curvature $R$. In this case, we have
$$\mu_s=\ln V(M)-\frac{n}{2}\left(1+\ln\left(\frac{2\pi n}{R}\right)\right),$$\noindent where $V(M)$ denotes the volume of $M$. 
\end{example}

\begin{example}
The entropy of the cylinder shrinker $\mathbb{S}^{n-k}(r) \times \mathbb{R}^k$ is given by 
$$\mu_{s}=\log \omega_{n-k}-\frac{(n-k)}{2}\left[\log\left(\frac{2\pi}{n-k-1}\right)+1\right],$$
\noindent where $\omega_{n-k}$ denotes the area of the unit $(n-k)$-dimensional sphere.
\end{example}

\section{ Proofs of the main results}

We begin with the following lemma.

\begin{lemma}\label{rlow}
Let $(M,g,f, \tau)$ be an $n$-dimensional complete  Ricci shrinker. Then, 
$$\mathcal{L}=-\Delta+\frac{R}{4}+\frac{f}{4\tau},$$
\noindent  has discrete spectrum and its lowest eigenvalue is given  by
$$\lambda_0(\mathcal{L})=-\frac{1}{4\tau}\ln\int_M e^{-f }\, dv + \frac{\mu_s}{4\tau}+\frac{n}{4\tau}\left(1+\frac{1}{2}\ln(4\pi\tau)\right),$$
\noindent where $\mu_s$ denotes the entropy of the Ricci shrinker.
    
\end{lemma}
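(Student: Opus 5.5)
The plan is to exhibit an explicit positive eigenfunction of $\mathcal{L}$, namely $\phi=e^{-f/2}$, and read off the eigenvalue from Hamilton's identity \eqref{hamilton}. Since a positive $L^2$ eigenfunction of a Schrödinger operator with discrete spectrum must be the ground state, its eigenvalue is $\lambda_0(\mathcal{L})$.

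\textbf{Discreteness.} If $M$ is compact there is nothing to prove. If $M$ is noncompact, recall that $R\ge 0$ by Chen's result. By Remark \ref{rCZ1}, after rescaling to $\tau=1$ and normalizing, $f\to+\infty$ at infinity. Adding a constant to $f$ does not affect this. Hence $V=R/4+f/(4\tau)$ is locally bounded, bounded below and confining, and $\mathcal{L}$ has discrete spectrum by the criterion recalled in the introduction.

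\textbf{Eigenfunction computation.} Set $\phi=e^{-f/2}$. Then
\[
\Delta\phi=\Big(\tfrac14|\nabla f|^2-\tfrac12\Delta f\Big)\phi .
\]
Using \eqref{eqs}, $\Delta f=\frac{n}{2\tau}-R$, and \eqref{hamilton}, $|\nabla f|^2=\frac{f+C}{\tau}-R$, we get
\[
-\Delta\phi+\frac R4\phi+\frac{f}{4\tau}\phi=\Big(\frac{n}{4\tau}-\frac{C}{4\tau}\Big)\phi .
\]
Here the $R$ terms cancel ($R/4-R/2+R/4=0$), and the $f$ terms cancel as well.

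\textbf{Identifying $C$.} By \eqref{rentropy} and \eqref{hamilton},
\[
C=\tau(R+|\nabla f|^2)-f=\ln V_f(M)-\mu_s ,\qquad \ln V_f(M)=\ln\int_M e^{-f}\,d\upsilon-\frac n2\ln(4\pi\tau).
\]
Substituting gives the eigenvalue
\[
\frac{n}{4\tau}-\frac{1}{4\tau}\ln\int_M e^{-f}\,d\upsilon+\frac{n}{8\tau}\ln(4\pi\tau)+\frac{\mu_s}{4\tau},
\]
which is exactly the stated formula.

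\textbf{Ground state and main obstacle.} The main step is justifying that $\phi$ really is the lowest eigenfunction. First, $\phi\in L^2(M)$ because $\int_M e^{-f}\,d\upsilon<\infty$ by Morgan's result \cite{Morgan}. Next, $\phi$ must lie in the form domain. For this I would use $|\nabla\phi|^2=\frac14|\nabla f|^2e^{-f}\le C'(f+1)e^{-f}$, together with $(R+f)e^{-f}\in L^1$. The latter holds because $R\le \frac{f+C}{\tau}$ and $f$ grows quadratically (Theorem \ref{CZ1}) while the volume growth is polynomial. I would also check that the eigenfunction equation holds in the operator sense, via a cutoff argument. Finally, the ground state of a Schrödinger operator with discrete spectrum on a connected manifold is simple and can be chosen positive, and eigenfunctions for distinct eigenvalues are $L^2$-orthogonal. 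Since $\phi>0$, it cannot be orthogonal to the positive ground state, so its eigenvalue is $\lambda_0(\mathcal{L})$.
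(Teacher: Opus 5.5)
Your proof is correct, and its core computation is the paper's. Your identity $\mathcal{L}e^{-f/2}=\frac{n-C}{4\tau}\,e^{-f/2}$ is the same as the paper's identity $\frac14\bigl(R+\frac{f}{\tau}\bigr)=\frac14|\nabla f|^2-\frac12\Delta f+\lambda$. Your $\phi=e^{-f/2}$ is precisely $U^{-1}(1)$ for the paper's unitary map $U(u)=ue^{f/2}$ from $L^2(M)$ to $L^2(M,e^{-f}d\upsilon)$.

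Where you differ is in showing that $\lambda$ is the bottom of the spectrum.

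\textbf{Your route.} You use that a positive eigenfunction must be the ground state. This needs simplicity and positivity of the ground state. It also forces you to check by hand that $e^{-f/2}$ lies in the operator domain, using the quadratic growth of $f$ and the polynomial volume growth of shrinkers. In exchange, you exhibit the ground state explicitly.

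\textbf{The paper's route.} The paper conjugates the whole operator: $U\mathcal{L}U^{-1}=-\Delta_f+\lambda$, so $\lambda_0(\mathcal{L})=\lambda_0(-\Delta_f)+\lambda$. Then $\lambda_0(-\Delta_f)=0$ simply because $-\Delta_f\ge 0$ and the constant $1$ lies in its domain once $\int_M e^{-f}\,d\upsilon<\infty$. The lower bound $\lambda_0(\mathcal{L})\ge\lambda$ thus follows from nonnegativity of $-\Delta_f$ alone, with no appeal to positivity or simplicity of the ground state. Moreover, membership of $e^{-f/2}$ in the domain of $\mathcal{L}$, and hence integrability of $(R+|\nabla f|^2+f)e^{-f}$, is inherited from that of $1$. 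No volume growth estimate is needed.

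The conjugation $-\Delta_f\leftrightarrow-\Delta-\frac12\Delta f+\frac14|\nabla f|^2$ is also what the paper reuses later for the weighted versions of the operators.
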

\begin{proof}
   In the  noncompact case, by the nonnegativity of $R$ and Remark \ref{rCZ1},  the function $$\frac{R}{4}+\frac{f}{4\tau}$$ \noindent is a confining potential. Therefore, $\mathcal{L}$ has discrete spectrum.
Next, we compute the lowest eigenvalue of $\mathcal{L}$. 
 It follows from \eqref{rentropy} and \eqref{eqs} that
\begin{equation}\label{rf}
\begin{aligned}
    \frac14\left( R+\frac{f}{\tau}\right)
    &=\frac14\left(\frac{n}{\tau}+\frac1{\tau}\mu_s-\frac1{\tau}\ln V_f(M)-2\Delta f+|\nabla f|^2\right) \\
    &=\frac{|\nabla f|^2}{4}-\frac{\Delta f}{2} +\lambda,
    \end{aligned}
\end{equation}
 where
$$\lambda:= \frac{\mu_s}{4\tau}+\frac{n}{4\tau}-\frac{\ln V_f(M)}{4\tau}.$$
Let $U:L^2(M)\rightarrow L^2(M,e^{-f}d\upsilon)$ be  the unitary isomorphism  given by $U(u)=ue^{\frac{f}{2}}$. A direct computation shows that $$-\Delta_f=U\left(-\Delta-\frac{1}{2}\Delta f+\frac{1}{4}|\nabla f|^2\right)U^{-1}.$$
Thus, by \eqref{rf}, we have
$$-\Delta_f+\lambda= U\mathcal{L}U^{-1}.$$
Consequently,
\begin{eqnarray*}
\lambda_0(\mathcal{L}) & = & \lambda_0\left(-\Delta_f+\lambda \right)
= \lambda_0\left(-\Delta_f\right)+\lambda.\end{eqnarray*}
Since  $\int_M e^{-f} \, d\upsilon $ is finite, $\lambda_0\left(-\Delta_f\right)=0$. Therefore, $\lambda_0(\mathcal{L})=\lambda$, that is,
$$\lambda_0(\mathcal{L})=-\frac{1}{4\tau}\ln\int_M e^{-f}\, dv + \frac{\mu_s}{4\tau}+\frac{n}{4\tau}\left(1+\frac{1}{2}\ln(4\pi\tau)\right).$$
\end{proof}    
\vskip 2mm
A key ingredient in the proofs of Theorems \ref{mainteo1}, \ref{mainteo2}, \ref{mainteo3}, and \ref{mainteo4} is the following variational principle.

\begin{lemma}[Gibbs variational principle]\label{gibbs} Let $(M,g)$ be a Riemannian manifold, let $\mu$ be a measure on $M$, and let $H : M \to \mathbb{R}$ be a measurable function such that $e^{-tH}\in L^1(M,\mu)$, for some $t>0$. Then 

$$\inf_{\int_M u^2 \, d\mu=1} \left\{\int_M u^2\ln u^2 \, d\mu+t\int_M H u^2 \, d\mu\right\}=-\ln\int_M e^{-tH} \, d\mu.$$
Moreover, the infimum is attained uniquely at the Gibbs density
\[
u^2(x)=\frac{e^{-tH}}{\int_M e^{-tH} \, d\mu}.
\]
\end{lemma}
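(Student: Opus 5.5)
The plan is to reduce the statement to Jensen's inequality for relative entropy. Set $Z=\int_M e^{-tH}\,d\mu\in(0,\infty)$; it is finite by hypothesis and positive because $e^{-tH}>0$ (assuming $\mu\neq 0$). Let $\rho=e^{-tH}/Z$, a probability density with respect to $\mu$, and write $\nu=\rho\,\mu$. For any $u$ with $\int_M u^2\,d\mu=1$, put $w=u^2$. Wherever the integrals make sense, the identity $tH=-\ln\rho-\ln Z$ gives
\[
\int_M w\ln w\,d\mu+t\int_M Hw\,d\mu=\int_M w\ln\frac{w}{\rho}\,d\mu-\ln Z .
\]
So it suffices to show that $\int_M w\ln(w/\rho)\,d\mu\ge 0$, with equality if and only if $w=\rho$ $\mu$-a.e. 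This is the nonnegativity of the Kullback--Leibler divergence of $w\mu$ from $\nu$.

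To prove that, write $h=w/\rho$. This is possible since $\rho>0$ everywhere. Then $\int_M h\,d\nu=1$, and
\[
\int_M w\ln(w/\rho)\,d\mu=\int_M \varphi(h)\,d\nu,
\]
where $\varphi(s)=s\ln s$ is strictly convex on $[0,\infty)$ with $\varphi(0)=0$. By Jensen's inequality on the probability space $(M,\nu)$,
\[
\int_M\varphi(h)\,d\nu\ge \varphi\Bigl(\int_M h\,d\nu\Bigr)=\varphi(1)=0.
\]
By strict convexity, equality forces $h\equiv 1$ $\nu$-a.e., that is, $w=\rho$. A cleaner way to see this without Jensen's measure-theoretic fine print is the pointwise inequality $\varphi(s)\ge s-1$, with equality only at $s=1$. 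Integrating it against $\nu$ gives $\int_M\varphi(h)\,d\nu\ge \int_M (h-1)\,d\nu=0$. It also shows that equality holds exactly when $h=1$ a.e., since $\varphi(s)-(s-1)$ is a nonnegative integrand that vanishes only at $s=1$.

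Next, attainment. Plugging in $u^2=\rho$ gives $\int_M \rho\ln\rho\,d\mu+t\int_M H\rho\,d\mu=-\ln Z$ directly. This requires $H\rho\in L^1(\mu)$, which holds whenever the functional at $\rho$ is finite. In the applications it follows from $H$ being bounded below: $sH e^{-sH}$ is bounded in $H$ for $H$ bounded below, and $e^{-tH}\in L^1$. With attainment, the infimum equals $-\ln Z$, and uniqueness follows from the equality case above. Taking $u\ge 0$ is harmless because only $u^2$ enters.

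The main obstacle is the integrability bookkeeping: the splitting above requires $\int_M w\ln w\,d\mu$ and $\int_M Hw\,d\mu$ to be individually meaningful. My first step would be to adopt the convention that the functional is $+\infty$ whenever $\int_M Hw\,d\mu=+\infty$ or $(w\ln w)_+\notin L^1$. Using $\varphi(h)\ge h-1$, the negative part of $\varphi(h)$ is $\nu$-integrable, so $\int_M\varphi(h)\,d\nu$ is always well defined in $(-\infty,+\infty]$. The identity then holds in the extended sense and the inequality persists. If $\int_M H_-\,w\,d\mu$ could be infinite, meaning $H$ is unbounded below, I would restrict to $H$ bounded below, as in the standing assumptions of the paper. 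I would also note that the pointwise identity $w\ln w+tHw=w\ln(w/\rho)-w\ln Z$ can be integrated directly once one side is known to be quasi-integrable.
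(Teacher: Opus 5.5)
The paper gives no proof of this lemma. It is quoted as the standard Gibbs variational principle and used directly, so there is no authors' argument to compare with. Your proof is the standard one, and its core is correct. Writing $w=\rho h$, the functional becomes $\int_M\varphi(h)\,d\nu-\ln Z$. The inequality $\varphi(s)\ge s-1$, with equality only at $s=1$, then gives both the lower bound $-\ln Z$ and the uniqueness. Uniqueness holds $\mu$-a.e. because $\rho>0$, so $\mu$ and $\nu$ have the same null sets.

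The integrability paragraph, however, contains a false step. If $H$ is bounded below and $e^{-tH}\in L^1(\mu)$, it does \emph{not} follow that $He^{-tH}\in L^1(\mu)$ when $\mu(M)=\infty$. Boundedness of $He^{-tH}$ only gives integrability for finite $\mu$. Here is a counterexample on $M=\mathbb{R}$ with Lebesgue measure:
\begin{itemize}
\item Set $tH(x)=\ln|x|+2\ln\ln|x|$ for $|x|\ge e$ and $H=0$ otherwise. Then $H\ge 0$, and $H$ is even confining.
\item $e^{-tH}=1/(|x|\ln^2|x|)$ is integrable at infinity.
\item But $He^{-tH}\ge 1/(t|x|\ln|x|)$ is not integrable.
\end{itemize}
This is exactly the kind of $H$ allowed in Theorem \ref{mainteo2}, where $\mu=d\upsilon$ and $H=V$.

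In this example $\int_M\rho\ln\rho\,d\mu=-\infty$ and $t\int_M H\rho\,d\mu=+\infty$. Your first convention assigns the value $+\infty$ whenever $\int_M Hw\,d\mu=+\infty$, so it gives the Gibbs density the value $+\infty$. The infimum $-\ln Z$ is then only approached, for example by normalized truncations of $\rho$, and never attained. So the ``attained uniquely'' part fails under that convention.

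Restricting to $H$ bounded below also does not fit the paper. In Theorem \ref{mainteo1} the lemma is applied with $H=V-\frac{f}{4\tau}$, which can tend to $-\infty$. For instance, take $V=\frac{f}{8\tau}$ on the Gaussian shrinker.

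The fix is the remark in your last sentence, made into the definition. Read the functional as $\int_M(w\ln w+tHw)\,d\mu$, with the two terms combined into one integrand. Pointwise,
$w\ln w+tHw=\rho\,\varphi(h)-w\ln Z\ge w-\rho-w\ln Z\in L^1(\mu)$.
Consequently, for every admissible $w$ and every measurable $H$ with $Z<\infty$:
\begin{itemize}
\item the integral is well defined in $(-\infty,+\infty]$ and equals $\int_M\varphi(h)\,d\nu-\ln Z$;
\item it takes the value $-\ln Z$ at $w=\rho$;
\item it agrees with the split expression in the statement whenever $\int_M Hw\,d\mu$ is finite.
\end{itemize}
With this convention your argument proves the lemma exactly as stated, with no boundedness assumption on $H$.
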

We will also use the following lemma.
\begin{lemma}\label{r1}
    Let $(M,g,f)$ be an $n$-dimensional complete smooth metric measure space with scalar curvature $R$ bounded below.  Fix a number $\tau>0$ and let $d\mu=(4\pi\tau)^{-\frac{n}{2}}e^{-f}d\upsilon$ be the weighted measure. Then  the spectra of $-\Delta+\frac{R}{4}+V$ on $L^2(M)$ and   $-\Delta_f+\frac{R_f}{4}+V$ on $L^2(M,\mu)$ coincide, where $R_f=R+|\nabla f|^2+2\Delta_ff$. In particular, the bottoms of their spectra satisfy
$$\lambda_0\left(-\Delta_f+\frac{R_f}{4}+V\right)= \lambda_0\left(-\Delta+\frac{R}{4}+V\right),$$
  
\end{lemma}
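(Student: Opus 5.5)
We prove Lemma \ref{r1} by a ground-state transformation, as in the proof of Lemma \ref{rlow}. Note that the constant factor $(4\pi\tau)^{-\frac n2}$ in $d\mu$ does not change the spectrum, so we may work on $L^2(M,e^{-f}d\upsilon)$. Let $U:L^2(M)\to L^2(M,e^{-f}d\upsilon)$ be the unitary map $U(u)=ue^{f/2}$ used there. The computation recorded in that proof gives
\[
-\Delta_f=U\Big(-\Delta-\tfrac12\Delta f+\tfrac14|\nabla f|^2\Big)U^{-1}.
\]
Multiplication operators commute with $U$, so
\[
U\Big(-\Delta+\tfrac R4+V\Big)U^{-1}=-\Delta_f+\tfrac12\Delta f-\tfrac14|\nabla f|^2+\tfrac R4+V.
\]

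It remains to identify the zeroth-order term with $\tfrac{R_f}{4}+V$. Since $\Delta_f f=\Delta f-|\nabla f|^2$, we have
\[
R_f=R+|\nabla f|^2+2\Delta f-2|\nabla f|^2=R+2\Delta f-|\nabla f|^2,
\]
and hence
\[
\tfrac{R_f}{4}=\tfrac R4+\tfrac12\Delta f-\tfrac14|\nabla f|^2.
\]
This is exactly the potential obtained above, so $-\Delta_f+\frac{R_f}{4}+V=U(-\Delta+\frac R4+V)U^{-1}$.

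The step needing the most care is to make this identity hold at the level of self-adjoint operators, not just formal differential expressions. Both operators act on $C_c^\infty(M)$, and $U$ maps $C_c^\infty(M)$ bijectively onto itself because $e^{\pm f/2}$ is smooth. The identity therefore holds on this common core. Since $R$ is bounded below and $V$ is locally bounded and bounded below, $\frac R4+V$ is locally bounded and bounded below. Hence $-\Delta+\frac R4+V$ is essentially self-adjoint on $C_c^\infty(M)$, as recalled in the introduction. Unitary conjugation preserves essential self-adjointness, so the conjugated operator is essentially self-adjoint on $C_c^\infty(M)$ in $L^2(M,e^{-f}d\upsilon)$, and its closure is $U\overline{(-\Delta+\frac R4+V)}U^{-1}$.

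We do not need $R_f$ itself to be bounded below. We simply define the self-adjoint realization of $-\Delta_f+\frac{R_f}{4}+V$ as this closure, equivalently as the Friedrichs extension of the associated quadratic form. By the unitary equivalence this form is bounded below, with
\[
\int_M\Big(|\nabla u|^2+\tfrac R4u^2+Vu^2\Big)d\upsilon
\]
corresponding to
\[
\int_M\Big(|\nabla w|^2+\big(\tfrac{R_f}4+V\big)w^2\Big)e^{-f}d\upsilon
\]
for $w=Uu$; this can be checked by integrating by parts on compactly supported functions.

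Unitarily equivalent self-adjoint operators have the same spectrum. In particular, their bottoms of spectrum coincide, which gives the stated equality of $\lambda_0$.
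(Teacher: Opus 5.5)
Your proof is correct and is essentially the paper's argument: conjugate by the ground-state map $U(u)=ue^{f/2}$ and use $R_f=R+2\Delta f-|\nabla f|^2$ to identify the resulting zeroth-order term with $\frac{R_f}{4}+V$. Your additional points fill in details the paper leaves implicit. First, the factor $(4\pi\tau)^{-\frac n2}$ is harmless; strictly, $U$ is unitary onto $L^2(M,e^{-f}d\upsilon)$, or onto $L^2(M,\mu)$ after rescaling by $(4\pi\tau)^{\frac n4}$. Second, the identity on the common core $C_c^\infty(M)$ passes to the closures through essential self-adjointness.
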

\begin{proof}
    As proved in Lemma \ref{rlow}, there exists a unitary isomorphism $U:L^2(M)\rightarrow L^2(M,\mu)$ such that
$$-\Delta_f=U\left(-\Delta-\frac{1}{2}\Delta f+\frac{1}{4}|\nabla f|^2\right)U^{-1}.$$
Using \(R_f=R+2\Delta f-|\nabla f|^2,\)
we obtain
$$-\Delta_f+\frac{R_f}{4}+V=U\left(-\Delta+\frac{R}{4}+V\right)U^{-1}$$
Therefore, the conclusion follows.
\end{proof}

\vskip 2mm

Now we prove Theorem \ref{mainteo1}.

\begin{proof}[Proof of Theorem  \ref{mainteo1}]  In the noncompact case,
since $R\geq 0$ and $V$ is confining,   $V+\frac{R}{4}$ is  a confining potential. Hence,  $\mathcal{L}$ has discrete spectrum. 

 By the  invariance of $\mu_s$ under translations of the potential functions, we may assume, up to a translation, that the measure $d\mu=(4\pi\tau)^{-\frac{n}{2}}e^{-f}d\upsilon$ is a probability measure. By Lemma \ref{r1}, $$\mathcal{L}_f=-\Delta_f+\frac{R_f}{4}+V$$ on $L^2(M,d\mu)$ has the same discrete spectrum, and hence
there exists a positive function  $u\in  C^{\infty}(M)$ with $\int_M u^2 \, d\mu=1$ such that $$\lambda_0(\mathcal{L}_f)=\int_M\left(|\nabla u|^2 +\frac{R_f}{4}u^2+Vu^2\right)d\mu.$$
 Using \eqref{eqs} and \eqref{rentropy1}, we obtain
 \begin{equation}\label{rf}
\begin{aligned}
R_f & =  R-|\nabla f|^2+2\Delta f\\
& = -R-|\nabla f|^2+\frac{n}{\tau}\\
& =  \frac{1}{\tau}\left(n+\mu_s-f\right).
\end{aligned}
\end{equation}
Therefore, 
\begin{equation}\label{e1t2}\lambda_0(\mathcal{L}_f)=\int_M|\nabla u|^2 \, d\mu + \int_M \left(V-\frac{f}{4\tau}\right) u^2 \, d\mu + \frac{1}{4\tau}(n+\mu_s).\end{equation}
By  \eqref{log} and then Lemma \ref{gibbs}, we have
\begin{equation}\label{e2t2}
  \begin{aligned}
      \lambda_0&(\mathcal{L}_f)\\
&\geq \frac{1}{4\tau}\left\{\int_M u^2\ln u^2 \, d\mu + 4\tau\int_M \left(V-\frac{f}{4\tau}\right) u^2 \, d\mu \right\} + \frac{n}{4\tau}+\frac{\mu_s}{4\tau}\\
& \geq - \frac{1}{4\tau} \ln \int_M e^{-4\tau \left(V-\frac{f}{4\tau}\right)}d\mu + \frac{n}{4\tau}+\frac{\mu_s}{4\tau}\\
& =  - \frac{1}{4\tau} \ln \int_M e^{-4\tau V} (4\pi\tau)^{-\frac{n}{2}}d\upsilon + \frac{n}{4\tau}+\frac{\mu_s}{4\tau}.
  \end{aligned}  
\end{equation}
Hence,
\begin{align}\label{e4t2}
    \lambda_0(\mathcal{L})&= \lambda_0(\mathcal{L}_f )\nonumber\\
    &\geq  -\frac{1}{4\tau}\ln\int_M e^{-4\tau V}\, dv + \frac{\mu_s}{4\tau}+\frac{n}{4\tau}\left(1+\frac{1}{2}\ln(4\pi\tau)\right),
\end{align}
which is \eqref{e1}.
Assume that  equality holds in (\ref{e4t2}). Then all inequalities in \eqref{e2t2} become equalities. In particular, by Proposition \ref{lsi},   $\nabla^2 (\ln u^2)=0$, and by  Lemma \ref{gibbs}, 
 $$u^2=\frac{e^{-4\tau V+f}}{(4\pi\tau)^{-\frac{n}{2}}\int_M e^{-4\tau V} \, d\upsilon}.$$
 Then,
$$4\tau V=f-\ln u^2+\textrm{const.}$$
Consequently,
$$\textrm{Ric}_{4\tau V}=\textrm{Ric}_f-\nabla^2 (\ln u^2)=\textrm{Ric}_f.$$
Therefore, we have $$\textrm{Ric}_{4\tau V}=\frac{1}{2\tau}g,$$
which shows that  $4\tau V$ is a potential
function of the Ricci shrinker.
\vskip 2mm
Conversely, if $4\tau V$ satisfies $$\textrm{Ric}_{4\tau V}=\frac{1}{2\tau}g, $$
then  equality holds in \eqref{e4t2} by Lemma \ref{rlow}.
\end{proof}

\vskip 2mm

Now we prove Theorem \ref{mainteo2}.

\begin{proof}[Proof of Theorem \ref{mainteo2}] By an argument analogous to that in the proof of Theorem \ref{mainteo1}, $\mathcal{L} $ has discrete spectrum, and hence there exists a positive  function $\phi\in  C^{\infty}(M)$ with  $\int_M \phi^2 \, d\upsilon=1$, such that 
\begin{eqnarray*} \lambda_0(\mathcal{L})  & = & \int_M\left(|\nabla \phi|^2+\frac{R}{4}\phi^2+V\phi^2\right) \, d\upsilon\\
& = & \int_M\left(\frac{1}{4}|\nabla \ln \phi^2|^2+\frac{R}{4}+V\right) \phi^2 \, d\upsilon.
\end{eqnarray*}
\noindent It follows from  \eqref{perel} that, 
\begin{equation}\label{mu1}
\begin{aligned}
\int_M\Big(\tau\big|\nabla\ln \phi^2\big|^2+\tau R - \ln \phi^2\Big)
\phi^2\,d\upsilon
&\\
\qquad
-n\left(1+\frac{1}{2}\ln(4\pi\tau)\right)
&\ge \mu(g,\tau).
\end{aligned}
\end{equation}
 Then, using \eqref{mu1} and  Lemma \ref{gibbs},   we obtain
\begin{equation}\label{teo3p}
    \begin{aligned}
\lambda_0(\mathcal{L})&=\int_M \left(\frac{1}{4}|\nabla \ln \phi^2|^2+\frac{R}{4}+V\right) \phi^2 \, d\upsilon\\
&\geq  \frac{1}{4\tau} \left(\int_M \phi^2 \ln \phi^2 \, d\upsilon+4\tau \int_M V\phi^2 \, d\upsilon\right)\\
&\qquad +\frac{\mu(g,\tau)}{4\tau}+\frac{n}{4\tau}\left(1+\frac{1}{2}\ln(4\pi\tau)\right)\\
&\geq -\frac{1}{4\tau}\ln\int_M e^{-4\tau V}\, dv+\frac{\mu(g,\tau)}{4\tau}+\frac{n}{4\tau}\left(1+\frac{1}{2}\ln(4\pi\tau)\right).
\end{aligned}
\end{equation}

\noindent Therefore we have proved \eqref{lowbound}.

If equality holds in \eqref{teo3p}, then all inequalities in the proof of \eqref{teo3p}
must be equalities.   Consequently, equality  in \eqref{mu1} implies that  $\mu(g,\tau)$ is attained by $\phi$, and  equality in Lemma \ref{gibbs} yields
$$\phi^2=\frac{e^{-4\tau V}}{\int_M e^{-4\tau V} \, d\upsilon}.$$
Let $f$  be defined by $\phi^2=(4\pi\tau)^{-\frac{n}{2}}e^{-f}$.
 Then $$f=4\tau V+C, \textrm{ where } C=\ln\left( (4\pi\tau)^{-\frac{n}{2}}\int_M e^{-4\tau V} \, d\upsilon\right).$$ 
  Therefore,  $\mu(g,\tau)$ is attained by $4\tau V+C$.
\vskip 2mm
  Conversely, if $\mu(g,\tau)$ is attained by $4\tau V+C$, we set $f=4\tau V+C$ and $\phi^2=(4\pi\tau)^{-\frac{n}{2}}e^{-f}$. 
  Then \begin{equation}\label{mu2}
  \begin{aligned}
      \int_M\Big(\tau\big|\nabla\ln \phi^2\big|^2&+\tau R - \ln \phi^2\Big)\phi^2\,d\upsilon\\
&\qquad -n\left(1+\frac{1}{2}\ln(4\pi\tau)\right)=\mu(g,\tau).
  \end{aligned}
   \end{equation}
 Since
  $$\phi^2=\frac{e^{-4\tau V}}{\int_M e^{-4\tau V} \, d\upsilon},$$ 
  by Lemma \ref{gibbs}, we have
  \begin{equation}\label{teo3p1}
\int_M \phi^2 \ln \phi^2 \, d\upsilon+4\tau \int_M V\phi^2 \, d\upsilon=-\ln\int_M e^{-4\tau V}\, dv.
\end{equation}
Hence by \eqref{mu2} and \eqref{teo3p1}, we obtain
\begin{equation}\label{teo3p2}
     \begin{aligned}
&\int_M \left(\frac{1}{4}|\nabla \ln \phi^2|^2+\frac{R}{4}+V\right) \phi^2 \, d\upsilon \\
&= -\frac{1}{4\tau}\ln\int_M e^{-4\tau V}\, dv
+\frac{\mu(g,\tau)}{4\tau}+\frac{n}{4\tau}\left(1+\frac{1}{2}\ln(4\pi\tau)\right).
\end{aligned}
\end{equation}
 Thus $\lambda_0(\mathcal{L})$ is attained by $\phi$. The proof is complete.
\end{proof}
\vskip 2mm
We now prove Theorem \ref{mainteo3} as a consequence of Theorem \ref{mainteo1}.

\begin{proof}[Proof of Theorem \ref{mainteo3}]
As shown in the proof of Lemma \ref{r1},  there exists a unitary isomorphism $U:L^2(M)\rightarrow L^2(M,\mu)$ such that
\begin{equation}\label{e0t22}-\Delta_f+V=U\left(-\Delta+\frac{R}{4}+\tilde{V}\right)U^{-1}\end{equation}
\noindent where 
$$\tilde{V}=V-\frac{R_f}{4}.$$
Then, by \eqref{rf},
\begin{equation}\label{e00t22}\tilde{V}=V+\frac{f}{4\tau}-\frac{(n+\mu_s)}{4\tau}.\end{equation}
Hence,
\begin{equation}\label{e1t22}\int_Me^{-4\tau \tilde{V}}d\upsilon=e^{n+\mu_s}(4\pi\tau)^{\frac{n}{2}}\int_M e^{-4\tau V} d\mu.\end{equation}
In particular, by $ e^{-4\tau V}\in L^1(M,\mu)$,  $ e^{-4\tau \tilde{V}}\in L^1(M)$.
\vskip 2mm
 Note that, by Remark \ref{rCZ1}, if $M$ is noncompact, then  $f$ is confining. Consequently,  $\tilde{V}$ is also confining. 
 Therefore, by Theorem \ref{mainteo1} and (\ref{e0t22}), the operator  $\mathcal{L}_f$ on $L^2(M,\mu)$, has discrete spectrum and 
\begin{equation}\label{e2t22}\lambda_0(\mathcal{L}_f)\geq -\frac{1}{4\tau}\ln\int_M e^{-4\tau \tilde{V}}\, d\upsilon + \frac{\mu_s}{4\tau}+\frac{n}{4\tau}\left(1+\frac{1}{2}\ln(4\pi\tau)\right).\end{equation}
Using (\ref{e1t22}), we obtain
\begin{equation}\label{e3t22}\lambda_0(\mathcal{L}_f)\geq -\frac{1}{4\tau}\ln\int_M e^{-4\tau V} \, d\mu.\end{equation}

Observe  that equality holds in \eqref{e3t22} if and only if equality holds in \eqref{e2t22}. By Theorem \ref{mainteo1}, this is equivalent to the fact that $4\tau\tilde{V}$ is a potential function of the Ricci shrinker; equivalently, $\tilde{V}\in C^{\infty}(M)$ and
$$\nabla^2(4\tau\tilde{V}-f)=0.$$
Hence, by (\ref{e00t22}), we obtain $V\in C^{\infty}(M)$ with $\nabla^2V=0$.
\end{proof}

\vskip 2mm

In view of Remark \ref{remav}, Theorem \ref{mainteo3} admits the following formulation for compact Ricci shrinkers.

\begin{theorem}\label{t22v2}
Let $(M,g,f)$ be an $n$-dimensional compact   Ricci shrinker such that the measure $\mu=(4\pi\tau)^{-\frac{n}{2}}e^{-f}d\upsilon$ is a probability measure, and let $V$  satisfy $e^{-4\tau V}\in L^1(M,\mu)$. Then,  $$\mathcal{L}_f=-\Delta_f+V$$ on $L^2(M,\mu)$ has discrete spectrum, and its lowest eigenvalue satisfies
    $$\lambda_0(\mathcal{L}_f)\geq -\frac{1}{4\tau}\ln\int_M e^{-4\tau V}\, d\mu.$$
Moreover, equality holds if and only if $V$ is a constant function.
\end{theorem}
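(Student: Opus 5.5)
Theorem \ref{t22v2} is the special case of Theorem \ref{mainteo3} in which $M$ is compact, so the plan is to invoke that theorem and then sharpen its rigidity statement using compactness. Since $M$ is compact, no confining hypothesis is needed. Theorem \ref{mainteo3} therefore applies directly: $\mathcal{L}_f$ has discrete spectrum on $L^2(M,\mu)$ and satisfies
\[
\lambda_0(\mathcal{L}_f)\geq -\frac{1}{4\tau}\ln\int_M e^{-4\tau V}\,d\mu .
\]
The same theorem says that equality holds if and only if $V\in C^\infty(M)$ with $\nabla^2V=0$. What remains is to show that, on a compact manifold, $\nabla^2 V=0$ is equivalent to $V$ being constant.

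One direction is trivial: a constant $V$ has $\nabla^2V=0$, so equality holds. (This can also be checked directly. If $V\equiv c$, then $\lambda_0(-\Delta_f+c)=c$, because $\lambda_0(-\Delta_f)=0$ with constant eigenfunction. Since $\mu$ is a probability measure, $-\frac{1}{4\tau}\ln e^{-4\tau c}=c$.)

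For the other direction, suppose $V$ is smooth with $\nabla^2V=0$. Taking the trace gives $\Delta V=0$. Because $M$ is compact without boundary, integrating $V\Delta V$ gives
\[
0=-\int_M V\Delta V\,d\upsilon=\int_M|\nabla V|^2\,d\upsilon,
\]
so $\nabla V\equiv0$. Alternatively, $V$ attains its maximum and is harmonic, so the maximum principle applies. Since $M$ is connected, $V$ is constant.

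An even more elementary argument is available. $\nabla V$ is a parallel vector field, so $|\nabla V|$ is constant. At a maximum point of $V$ we have $\nabla V=0$, hence $\nabla V\equiv0$ everywhere. This agrees with Remark \ref{remav}, since a compact $M$ cannot split off an $\mathbb{R}^k$ factor with $k\geq1$, which rules out alternative (2).

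There is no real obstacle here. The only point needing care is confirming that the hypotheses of Theorem \ref{mainteo3} are met in the compact case. The normalization of $\mu$ is assumed. Integrability of $e^{-4\tau V}$ is assumed. The standing assumption that $V$ is locally bounded and bounded below makes $V$ bounded below on the compact $M$, which is what guarantees the discreteness of the spectrum.
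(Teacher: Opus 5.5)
Your proposal is correct and follows the paper's own route: specialize Theorem \ref{mainteo3} to the compact case, then note that $\nabla^2 V=0$ forces $V$ to be constant on a compact manifold. The only difference is in that last step, where the paper appeals to the splitting description in Remark \ref{remav}; your parallel-gradient/maximum-point (or integration-by-parts) argument reaches the same conclusion more elementarily, without any splitting theorem.
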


Our final result provides a lower bound for the bottom of spectrum of $\mathcal{L}_f$  in terms of the  $\mu$-functional on the smooth metric measure space $(M,g,f)$.

\begin{theorem}\label{mainteo4} Let $(M,g,f)$ be an $n$-dimensional complete smooth metric measure space, and let $V$ satisfy $e^{-4\tau V}\in L^1(M,d\mu)$ for some $\tau>0$, where   $d\mu=(4\pi\tau)^{-\frac{n}{2}}e^{-f}d\upsilon$ is the weighted measure. Then, the bottom of the spectrum of  
$$\mathcal{L}_f=-\Delta_f+V$$ on $L^2(M,d\mu)$ satisfies
$$\lambda_0(\mathcal{L}_f)\geq -\frac{1}{4\tau}\ln \int_M e^{-4\tau V}d\mu+\mu(g,\tau)+ \frac{1}{4\tau}\inf_u\left\{ \int_M(n-f-\tau R_f) u^2 \, d\mu\right\},$$
where  $R_f=R+|\nabla f|^2+2\Delta_ff$ and  the infimum is taken over all functions $u\in C^{\infty}_c(M)$ with $\int_M u^2 \, d\mu=1$.
\end{theorem}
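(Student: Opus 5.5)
The plan is to work directly with the Rayleigh quotient of $\mathcal{L}_f$ on $L^2(M,d\mu)$, in the spirit of the proof of Theorem \ref{mainteo2}, but transported to the weighted setting through the unitary map of Lemma \ref{r1}. Since $C^\infty_c(M)$ is a core, we have
$$\lambda_0(\mathcal{L}_f)=\inf\Big\{\int_M\big(|\nabla u|^2+Vu^2\big)\,d\mu\ :\ u\in C^\infty_c(M),\ \int_M u^2\,d\mu=1\Big\}.$$
So it suffices to bound the quotient from below for each such $u$. Replacing $u$ by a smooth positive regularization of $|u|$ changes the quotient arbitrarily little, so I will not worry about sign issues.

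\textbf{Step 1: rewrite $\mathcal{K}$ in weighted form.} Fix such a $u$ and set $\phi=(4\pi\tau)^{-\frac n4}e^{-\frac f2}u$, so that $\int_M\phi^2\,d\upsilon=1$. The conjugation identity used in Lemma \ref{r1}, integrated against $\phi$, gives
$$\int_M\Big(|\nabla\phi|^2+\frac R4\phi^2\Big)d\upsilon=\int_M\Big(|\nabla u|^2+\frac{R_f}{4}u^2\Big)d\mu .$$
For the entropy term, since $\ln\phi^2=\ln u^2-f-\frac n2\ln(4\pi\tau)$,
$$\int_M\phi^2\ln\phi^2\,d\upsilon=\int_M u^2\ln u^2\,d\mu-\int_M fu^2\,d\mu-\frac n2\ln(4\pi\tau).$$
Substituting both into the definition of $\mathcal{K}$, the $\ln(4\pi\tau)$ terms cancel and
$$\mathcal{K}(g,\phi,\tau)=4\tau\int_M|\nabla u|^2d\mu-\int_M u^2\ln u^2\,d\mu-\int_M\big(n-f-\tau R_f\big)u^2\,d\mu .$$

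\textbf{Step 2: apply \eqref{perel}.} Using $\mathcal{K}(g,\phi,\tau)\ge\mu(g,\tau)$ and then adding $4\tau\int_M Vu^2\,d\mu$ to both sides yields
$$4\tau\!\int_M\!\big(|\nabla u|^2+Vu^2\big)d\mu\ \ge\ \mu(g,\tau)+\Big[\int_M u^2\ln u^2\,d\mu+4\tau\!\int_M\! Vu^2\,d\mu\Big]+\int_M(n-f-\tau R_f)u^2\,d\mu.$$

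\textbf{Step 3: conclude.} The bracket is bounded below by $-\ln\int_M e^{-4\tau V}d\mu$ by Lemma \ref{gibbs}, applied with $t=4\tau$, $H=V$ and the measure $\mu$. The last integral is bounded below by the infimum appearing in the statement. Dividing by $4\tau$ and taking the infimum over $u$ gives the claim. If that infimum is $-\infty$, the inequality is trivial.

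Note that this computation produces the term $\frac{\mu(g,\tau)}{4\tau}$, consistent with Theorems \ref{mainteo1} and \ref{mainteo2}. I therefore read the displayed $\mu(g,\tau)$ in the statement as $\frac{\mu(g,\tau)}{4\tau}$.

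\textbf{Main obstacle.} The delicate point is justifying $\mathcal{K}(g,\phi,\tau)\ge\mu(g,\tau)$ for $\phi$ compactly supported, or merely nonnegative. Definition \eqref{perel} is phrased for smooth $\phi$ corresponding to $f$ with $e^{-f}>0$. I would handle this with the usual approximation: replace $\phi$ by $\sqrt{\phi^2+\varepsilon\psi^2}$, suitably normalized, with $\psi$ a fixed positive function of unit $L^2$ norm that decays fast enough, and let $\varepsilon\to0$. The term $s\ln s$ is continuous at $0$, and the gradient and curvature terms converge, so the inequality passes to the limit. Everything else is a direct computation.
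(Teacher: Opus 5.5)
Your proposal is correct and follows essentially the same route as the paper: the substitution $\phi=(4\pi\tau)^{-\frac n4}e^{-\frac f2}u$, the rewriting of $\mathcal{K}(g,\phi,\tau)$ in weighted form, the bound $\mathcal{K}\ge\mu(g,\tau)$, and then the Gibbs principle with $t=4\tau$, $H=V$ and the measure $\mu$. You are right to read the $\mu(g,\tau)$ term as $\frac{\mu(g,\tau)}{4\tau}$: the paper's own final division by $4\tau$ drops this factor, and the literal version fails, for example on a round sphere scaled so that $4\tau<1$ with $V$ constant. Your approximation argument for applying \eqref{perel} to compactly supported $\phi$ addresses a point the paper passes over silently.
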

\begin{proof}
The bottom of spectrum of $\mathcal{L}_f$ satisfies
\[\lambda_0(\mathcal{L}_f)=\inf_{u}\left\{\int_M \left(|\nabla u|^2 + Vu^2\right) \, d\mu\right\},
\]
where the infimum is taken over all  functions $u\in C^{\infty}_c(M)$ satisfying $\int_M u^2 \, d\mu=1$.
\vskip 2mm
Let $u\in C^{\infty}_c(M)$ with $\int_M u^2 \, d\mu=1$ and  define $\phi=u(4\pi\tau)^{-\frac{n}{4}}e^{-\frac{f}{2}}.$
Then $$\int_M \phi^2 \, d\upsilon=\int_M u^2 \, d\mu=1,$$
\begin{equation}
\begin{aligned}
    &\int_M\big|\nabla \phi|^2\,d\upsilon \\
    & =  \int_M\left(|\nabla u|^2+\frac14|\nabla f|^2u^2-u\langle \nabla u,\nabla f\rangle\right)(4\pi\tau)^{-\frac{n}{2}}e^{-f}\,d\upsilon  \\
& =  \int_M\left(|\nabla u|^2+\frac14|\nabla f|^2u^2-\frac12\langle \nabla u^2,\nabla f\rangle\right) \, d\mu\\
& =  \int_M\left(|\nabla u|^2+\frac14|\nabla f|^2u^2+\frac12(\Delta_ff)u^2\right) \, d\mu,
\end{aligned}
\end{equation}
and
\begin{align}
  \int_M\phi^2\ln \phi^2d\upsilon&=\int_Mu^2\left( \ln u^2 -\frac{n}{2}\ln(4\pi\tau)-f\right)d\mu\nonumber\\
  &=\int_M\left(u^2\ln u^2 -fu^2\right)d\mu-\frac{n}{2}\ln(4\pi\tau).  
\end{align}
It follows that
\begin{equation}\label{mu3}
    \begin{aligned}
    &\int_M\Big(4\tau|\nabla \phi|^2+\tau R\phi^2 -\phi^2 \ln \phi^2\Big)\,d\upsilon-n\left(1+\frac{1}{2}\ln(4\pi\tau)\right) \\
    &= \int_M\Big(4\tau|\nabla u|^2+\tau|\nabla f|^2u^2+2\tau(\Delta_ff)u^2+\tau Ru^2 \Big)\,d\mu\\
    &\qquad \qquad-\int_M\left(u^2\ln u^2 -fu^2\right)d\mu-n\\
    &=\int_M\Big(4\tau|\nabla u|^2+\tau R_f u^2 -u^2\ln u^2 +fu^2\Big)\,d\mu-n
\end{aligned}
\end{equation}
By \eqref{perelman}, \eqref{mu3} implies that
\begin{equation}\label{eq1}\int_M\Big(4\tau|\nabla u|^2+\tau R_f u^2 -u^2\ln u^2 +fu^2\Big)\,d\mu-n\;\ge\; \mu(g,\tau).\end{equation}
It follows that
$$
4\tau\int_M|\nabla u|^2 \, d\mu \geq \int_M u^2\ln u^2  \, d\mu  +\int_M(n-f-\tau R_f) u^2 \, d\mu+\mu(g,\tau).$$
Consequently,
\begin{eqnarray*}
4\tau\int_M\left(|\nabla u|^2+V\right) u^2 \, d\mu & \geq & \int_M u^2\ln u^2 \, d\mu +4\tau\int_M Vu^2 \, d\mu\\
& & + \int_M(n-f-\tau R_f) u^2 \, d\mu+\mu(g,\tau).
\end{eqnarray*}
This implies that
\begin{eqnarray*} 4\tau \lambda_0(\mathcal{L}_f) &\geq & \inf_{u}\left\{\int_M u^2\ln u^2 \, d\mu +4\tau\int_M Vu^2 \, d\mu\right\}\\
& & +\inf_u\left\{ \int_M(n-f-\tau R_f) u^2 \, d\mu\right\}+\mu(g,\tau).\end{eqnarray*}
Hence, by Lemma \ref{gibbs}, we obtain
$$\lambda_0(\mathcal{L}_f)\geq -\frac{1}{4\tau}\ln \int_M e^{-4\tau V}d\mu+\mu(g,\tau)+\frac{1}{4\tau}\inf_u\left\{ \int_M(n-f-\tau R_f) u^2 \, d\mu\right\},$$
where the infimum is taken over all functions $u\in C^{\infty}_c(M)$ such that $\int_M u^2 \, d\mu=1$.
\end{proof}

\end{document}